\newtheorem{theorem}{Theorem}
\newtheorem{lemma}[theorem]{Lemma}
\newtheorem{cor}[theorem]{Corollary}
\theoremstyle{definition}
\newtheorem*{ack}{Acknowledgements}
\numberwithin{equation}{section}
\numberwithin{theorem}{section}
\numberwithin{figure}{section}
\newfont{\teneufm}{eufm10}
\newfont{\seveneufm}{eufm7}
\newfont{\fiveeufm}{eufm5}
\def\bbbc{{\mathchoice {\setbox0=\hbox{$\displaystyle\rm C$}\hbox{\hbox
to0pt{\kern0.4\wd0\vrule height0.9\ht0\hss}\box0}}
{\setbox0=\hbox{$\textstyle\rm C$}\hbox{\hbox
to0pt{\kern0.4\wd0\vrule height0.9\ht0\hss}\box0}}
{\setbox0=\hbox{$\scriptstyle\rm C$}\hbox{\hbox
to0pt{\kern0.4\wd0\vrule height0.9\ht0\hss}\box0}}
{\setbox0=\hbox{$\scriptscriptstyle\rm C$}\hbox{\hbox
to0pt{\kern0.4\wd0\vrule height0.9\ht0\hss}\box0}}}}
\def\bbbq{{\mathchoice {\setbox0=\hbox{$\displaystyle\rm
Q$}\hbox{\raise 0.15\ht0\hbox to0pt{\kern0.4\wd0\vrule
height0.8\ht0\hss}\box0}} {\setbox0=\hbox{$\textstyle\rm
Q$}\hbox{\raise 0.15\ht0\hbox to0pt{\kern0.4\wd0\vrule
height0.8\ht0\hss}\box0}} {\setbox0=\hbox{$\scriptstyle\rm
Q$}\hbox{\raise 0.15\ht0\hbox to0pt{\kern0.4\wd0\vrule
height0.7\ht0\hss}\box0}} {\setbox0=\hbox{$\scriptscriptstyle\rm
Q$}\hbox{\raise 0.15\ht0\hbox to0pt{\kern0.4\wd0\vrule
height0.7\ht0\hss}\box0}}}}
\def\bbbt{{\mathchoice {\setbox0=\hbox{$\displaystyle\rm
T$}\hbox{\hbox to0pt{\kern0.3\wd0\vrule height0.9\ht0\hss}\box0}}
{\setbox0=\hbox{$\textstyle\rm T$}\hbox{\hbox
to0pt{\kern0.3\wd0\vrule height0.9\ht0\hss}\box0}}
{\setbox0=\hbox{$\scriptstyle\rm T$}\hbox{\hbox
to0pt{\kern0.3\wd0\vrule height0.9\ht0\hss}\box0}}
{\setbox0=\hbox{$\scriptscriptstyle\rm T$}\hbox{\hbox
to0pt{\kern0.3\wd0\vrule height0.9\ht0\hss}\box0}}}}
\def\bbbs{{\mathchoice
{\setbox0=\hbox{$\displaystyle     \rm S$}\hbox{\raise0.5\ht0\hbox
to0pt{\kern0.35\wd0\vrule height0.45\ht0\hss}\hbox
to0pt{\kern0.55\wd0\vrule height0.5\ht0\hss}\box0}}
{\setbox0=\hbox{$\textstyle        \rm S$}\hbox{\raise0.5\ht0\hbox
to0pt{\kern0.35\wd0\vrule height0.45\ht0\hss}\hbox
to0pt{\kern0.55\wd0\vrule height0.5\ht0\hss}\box0}}
{\setbox0=\hbox{$\scriptstyle      \rm S$}\hbox{\raise0.5\ht0\hbox
to0pt{\kern0.35\wd0\vrule height0.45\ht0\hss}\raise0.05\ht0\hbox
to0pt{\kern0.5\wd0\vrule height0.45\ht0\hss}\box0}}
{\setbox0=\hbox{$\scriptscriptstyle\rm S$}\hbox{\raise0.5\ht0\hbox
to0pt{\kern0.4\wd0\vrule height0.45\ht0\hss}\raise0.05\ht0\hbox
to0pt{\kern0.55\wd0\vrule height0.45\ht0\hss}\box0}}}}
\def\bbbz{{\mathchoice {\hbox{$\sf\textstyle Z\kern-0.4em Z$}}
{\hbox{$\sf\textstyle Z\kern-0.4em Z$}} {\hbox{$\sf\scriptstyle
Z\kern-0.3em Z$}} {\hbox{$\sf\scriptscriptstyle Z\kern-0.2em
Z$}}}}
\def\squareforqed{\hbox{\rlap{$\sqcap$}$\sqcup$}}
\def\qed{\ifmmode\squareforqed\else{\unskip\nobreak\hfil
\penalty50\hskip1em\null\nobreak\hfil\squareforqed
\parfillskip=0pt\finalhyphendemerits=0\endgraf}\fi}
\def\cB{{\mathcal B}}
\def\cI{{\mathcal I}}
\def\cS{{\mathcal S}}
\def\le{\leqslant}
\def\leq{\leqslant}
\def\ge{\geqslant}
\def\leq{\leqslant}
\def\va{{\mathbf{a}}}
\def\vb{{\mathbf{b}}}
\def\vc{{\mathbf{c}}}
\def\vn{{\mathbf{n}}}
\def \fB{\mathfrak{B}}
\def \fW{\mathfrak{W}}
\newcommand\ve\varepsilon
\newcommand{\ZZ}{\mathbb{Z}}
\renewcommand{\geq}{\geqslant}
\renewcommand{\leq}{\leqslant}
\newcommand{\ignore}[1]{}
\def\e{\mathbf{e}}
\def \Z{\mathbb{Z}}
\def \R{\mathbb{R}}
\def \N{\mathbb{N}}
\def \Z{\mathbb{Z}}
\def\mand{\qquad\mbox{and}\qquad}
\def\\{\cr}
\def\({\left(}
\def\){\right)}
\def\e{\mathbf{e}}
\def\cFH{\mathcal{F}_{k}(H)}
\def\cFHs{\mathcal{F}^*_{k}(H)}
\def\cGH{\mathcal{G}_{g}(H)}
\def\UkH{U_{k}(m,H,N)}
\def\WgH{W_{g}(m,H,N)}
\def\BH{\cB_{k}(H)}
\def\IH{\cI_g(H)}
\begin{document}

\title{Square-free values of random polynomials} 

\author{Tim D. Browning}
\address{IST Austria, Am Campus 1, 3400 Klosterneuburg, Austria}
\email{tdb@ist.ac.at}

\author[I. E. Shparlinski] {Igor E. Shparlinski}
\address{School of Mathematics and Statistics, University of New South Wales, Sydney, NSW 2052, Australia}
\email{igor.shparlinski@unsw.edu.au}

\begin{abstract}  
The question of whether or not a given integral polynomial takes infinitely many square-free values has only been addressed unconditionally for polynomials of degree at most $3$. We address this question, on average, for polynomials of arbitrary  degree. 
\end{abstract}

\subjclass[2010]{11N32 (11D79)}

\date{\today}
\maketitle

\thispagestyle{empty}
\setcounter{tocdepth}{1}
\tableofcontents

\section{Introduction} 

Let 
$f \in \Z[X]$ 
be an  irreducible  polynomial of degree $k$, without a fixed square divisor.   We denote by 
$S_f(N)$ the number of positive integers $n \le N$ such 
that $f(n)$ is square-free.  It is  expected that 
\begin{equation}
\label{eq:Conj}
S_f(N) = c_f N + o(N),
\end{equation}
as $N \to \infty$, where
\[
c_f = \prod_{p~\mathrm{prime}}\(1 - \frac{\rho_f(p^2)}{p^2}\)
\]  
and $\rho_f(m)=\#\{n\in \Z/m\Z: f(n) \equiv 0 \bmod m\}$, for any positive integer $m$. When $k\leq 3$ this expectation follows from pioneering work of Hooley~\cite{hooley1}.  
(In fact, when $k=3$, Reuss~\cite{reuss} has produced an asymptotic formula for $S_f(N)$ with a power saving error term.)
However for polynomials of degree $k \geq 4$ we only 
have a conditional treatment under the $abc$-conjecture, thanks to work of Granville 
\cite{Gra}.

In this paper, for $k\geq 4$, we lend support to the expectation~\eqref{eq:Conj} by showing that it holds for almost all polynomials of degree $k$, when they are ordered by naive height.   This sits in the framework of a great deal of recent work aimed at understanding the average size  of various arithmetic functions over the values of random polynomials, with a focus on the von Mangoldt function 
$\Lambda$, the Liouville function $\lambda$ and variants of the $r$-function~\cite{BST, BaZh,nick, FoZh,SkSo}.

For positive integers   $H$ and   $k\geq 2$, let 
\[
\cFH = \{a_0+\cdots+a_kX^k\in \Z[X]:~ 
(a_0, a_1, \ldots, a_k) \in \cB_{k}(H)\}, 
\]  
where 
\begin{align*}
\BH = \left\{(a_0,\ldots, a_k) \in \Z^{k+1}:
\begin{array}{l}
\gcd(a_0, a_1, \ldots, a_k)  = 1  \\
 |a_0 |,|a_1 |,\ldots , |a_k| \le H
 \end{array}
\right\}.
\end{align*}
For the problem of counting square-free values of polynomials, 
we 
are primarily be interested in the largest allowable  range of $H$, with respect to $N$, for which we can prove the existence of $\delta>0$ such that 
\begin{equation}\label{eq:target}
\frac{1}{\#\cFH}\sum_{f \in \cFH} \left| S_f(N) - c_f N\right|
\ll N^{1-\delta}.
\end{equation}
(See Section~\ref{sec:not} for a precise definition of the symbol $\ll$.) 
This
confirms that~\eqref{eq:Conj} holds unconditionally on 
average over the polynomials of naive height at most $H$. 
Throughout our work we  
allow all implied constants, including the implied constant in~\eqref{eq:target} to depend on the degree $k$, with any further dependence indicated by an appropriate subscript. 
Obviously, we would like to be able to take $H$ as small as possible, with respect to $N$, in assessing the validity of~\eqref{eq:target}.

Let $A\geq 1$ and let $\ve>0$. 
The second author has shown in~\cite[Theorem~1.1]{Shp} that
there exists $\delta >0$, depending only on $\ve$ such that~\eqref{eq:target} 
holds provided that 
\[
 N^A \ge H\geq N^{k-1+\varepsilon},
 \]   
and provided that we allow the implied constant to depend on $A$ and $\ve.$
In this paper we revisit this argument using tools 
from the geometry of numbers and the determinant method, in order to increase  the range of $H$, as follows. 
 
\begin{theorem}
\label{thm:Av F'} Let $A\geq 1$ and $k\ge 4$ 
be fixed.  Assume that 
\[
 N^A \ge H\geq N^{k-3+\varepsilon},
 \]   
 for $\ve>0$. Then there exists $\delta >0$, depending only on $\ve$, such that~\eqref{eq:target} holds, 
 with the implied constant depending only on $A$, $k$ and $\ve$.
\end{theorem}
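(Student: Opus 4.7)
The plan is to apply the Möbius identity $\mu^{2}(m)=\sum_{d^{2}\mid m}\mu(d)$ to get
$$S_f(N)=\sum_{d\ge 1}\mu(d)\,M_f(d,N),\qquad M_f(d,N):=\#\{n\le N:d^{2}\mid f(n)\},$$
and then truncate the $d$-sum at two thresholds $Y<Z$, where $Z:=\lceil((k+1)HN^{k})^{1/2}\rceil$ is chosen so that for $d>Z$ the divisibility $d^{2}\mid f(n)$ forces $f(n)=0$ (contributing at most $k$ values of $n$ per polynomial). In the small range $d\le Y$ I would run the standard elementary analysis $M_f(d,N)=\rho_f(d^{2})N/d^{2}+O(\rho_f(d^{2}))$, complete the resulting Euler product to $c_f$, and control both the completion error and the $O(\rho_f(d^{2}))$-error on average over $f\in\cFH$ via straightforward mean-value estimates for $\rho_f(d^{2})$.

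The heart of the argument is the medium range $Y<d\le Z$, where I would bound
$$\sum_{f\in\cFH}\Big|\sum_{Y<d\le Z}\mu(d)\,M_f(d,N)\Big|\;\le\;\sum_{Y<d\le Z}U_k(d^{2},H,N)\;=\;\sum_{Y<d\le Z}\sum_{n\le N}\#\bigl\{\vec a\in\BH: d^{2}\mid f_{\vec a}(n)\bigr\}.$$
For each pair $(n,d)$, the inner count is the number of lattice points in $[-H,H]^{k+1}$ of the integer lattice $\Lambda_{n,d}:=\{\vec a\in\Z^{k+1}:a_{0}+a_{1}n+\dots+a_{k}n^{k}\equiv0\pmod{d^{2}}\}$, which has determinant $d^{2}$. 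By the standard successive-minima estimate (in the style of Davenport), this count is $\ll H^{k+1}/d^{2}$ when every successive minimum of $\Lambda_{n,d}$ with respect to the box is $\le H$, and is strictly smaller otherwise. The contribution of such \emph{generic} pairs $(n,d)$ is then $\ll NH^{k+1}(\log Z)/Y$, which is acceptable for any polynomial choice $Y=N^{\delta}$.

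The main obstacle is the \emph{exceptional} contribution, from pairs $(n,d)$ for which $\Lambda_{n,d}$ carries an anomalously short non-zero vector; this is where the geometry-of-numbers/determinant-method hybrid is forced to produce its two-power-of-$N$ saving over \cite{Shp}. An exceptional short vector corresponds to a non-zero $g\in\Z[X]$ of degree at most $k$ and height at most some auxiliary parameter $B$ with $d^{2}\mid g(n)$, and I would count these triples $(g,n,d)$ by splitting according to the size of $d^{2}$ relative to $B(k+1)N^{k}$. When $d^{2}$ exceeds this product, the divisibility forces $g(n)=0$, so $n$ lies among at most $k$ roots of $g$, and the total number of such triples is bounded by $O(kB^{k+1}Z)$. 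When $d^{2}\le B(k+1)N^{k}$ the value $g(n)$ may be non-zero, and I would count admissible $d$ for each $(g,n)$ via the square-divisor bound $\#\{d:d^{2}\mid g(n)\}\ll(HN^{k})^{\varepsilon}$, and then re-apply the geometry-of-numbers argument to the remaining count of $(g,n)$-pairs. Optimising the auxiliary parameters $B$ and $Y$ against the target error $N^{1-\delta}\#\cFH$ and trading off the threshold $Z\ll(HN^{k})^{1/2}$ against the successive-minima error term ultimately pins down the constraint $H\ge N^{k-3+\varepsilon}$; achieving this optimisation cleanly, while keeping the exceptional set under control uniformly in $d$, is the step I expect to be the most delicate.
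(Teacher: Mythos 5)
Your skeleton — M\"obius inversion, truncation at $\asymp\sqrt{HN^k}$, and estimating $U_k(d^2,H,N)$ by counting lattice points of $\Lambda_{n,d}$ in $[-H,H]^{k+1}$ together with an exceptional-set analysis via Kuba's bound for reducible forms and a divisor bound on $\#\{d:d^2\mid g(n)\}$ — accurately reproduces the geometry-of-numbers half of the paper's argument (Lemma~\ref{lem: NmbrSol-Aver} is precisely the estimate you are sketching). However, there is a genuine gap: you are applying this single mechanism to the \emph{entire} range $Y<d\le Z$, and despite naming a ``geometry-of-numbers/determinant-method hybrid'' you never actually invoke the determinant method. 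The casework you describe (splitting on whether $d^2$ exceeds $B(k+1)N^k$, forcing $g(n)=0$ or invoking a divisor bound) is internal to the lattice-point count, not a separate tool; it leads exactly to the bound $R(Q,N,t)\ll(Qt^k+Nt^{k+1})(Nt)^{o(1)}$ and hence, after summing over dyadic $i$ and inserting into the lattice estimate, to a $H^kQ$ term. Evaluated at $Q\asymp\sqrt{HN^k}$ and divided by $\#\cFH\gg H^{k+1}$, that term is $\asymp H^{-1/2}N^{k/2}$, which is $O(N^{1-\delta})$ only for $H\ge N^{k-2+\ve}$. No choice of your auxiliary parameters $B$ and $Y$ fixes this, because the offending $Qt^k$ component is dominated precisely at the top of your $d$-range, and lowering the truncation point $Z$ is not possible since $\sqrt{HN^k}$ is forced by the size of $f(n)$.

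The missing ingredient is a \emph{third} dyadic range $E<d\ll\sqrt{HN^k}$ in which you do not sum trivially over $\mathcal F_k^*(H)$ but instead, for each fixed irreducible $f$, rewrite $d^2\mid f(n)$ as $f(n)=sr^2$ with a \emph{small} cofactor $s\ll HN^k/E^2$ and count such representations via Heath-Brown's theorem~\cite[Theorem~15]{cime} on integral points of bounded height on the absolutely irreducible curves $f(\nu+su)=sv^2$. This yields $Q_f(S,N)\le\bigl(N^{1/2}S^{1/2}+S\bigr)(HN)^{o(1)}$ (Lemma~\ref{lem:LargeSquareDiv}), hence a contribution $\ll H^{1/2}N^{(k+1)/2}/E+HN^k/E^2$ to the averaged error. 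Optimising the hand-off point $E$ between the two mechanisms at $E=\min\{H^{3/4}N^{(k+1)/4},\sqrt{HN^k}\}$ balances the $E/H$ term from the lattice estimate against $H^{1/2}N^{(k+1)/2}/E$, and only this three-way split (small, medium via geometry of numbers, large via determinant method) produces the exponent $k-3$. Without it, your plan is sound down to $H\ge N^{k-2+\ve}$ — one power shy of the target.
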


This is our main result. In fact, 
in Theorem~\ref{thm:Av F} 
we 
also  prove an upper bound for the left hand side of~\eqref{eq:target}, from which 
Theorem~\ref{thm:Av F'} follows. 
An alternative approach to this problem is available through modifying the proof of~\cite[Theorem~2.2]{BST} to treat the function
\[
F(n)=\mu^2(n)-\sum_{\substack{d^2\mid n\\ d\leq D}}
\mu(d),
\]   
for suitable $D\geq 1$. This is carried out in forthcoming work of Jelinek 
\cite{jelinek}.
 While this approach  does not seem to offer an improvement over the range of  $H$ in 
Theorem~\ref{thm:Av F'}, it does allow one to average over only two of the coefficients.

Inspired by recent work on Vinogradov's mean value theorem we can also treat a related problem in which we only vary one coefficient. Let 
\[
g(X) = b_1X+\cdots+b_kX^k \in \Z[X]
\]  
be given and note that 
 $g(0)=0$. Then we may consider the set  
\[
\cGH = \{a +g(X)\in \Z[X]:~  a \in \IH\},
\]  
where 
\[
\IH= \{a \in \Z:~\gcd(a, b_1, \ldots, b_k)  = 1,  \  |a| \le H\}. 
\]  
While the approach of~\cite{Shp} also applies to polynomials from $\cGH$, 
 we 
supplement it with some new bounds for  residues of polynomials falling in short intervals, 
which complement those of~\cite{BMS, CCGHSZ, CGOS,G-MRST, KerrMoh}.  
To formulate the result, we define
\begin{equation}
\label{eq:eta-k}
\eta(k) = \begin{cases} 2^{-k+1}, & \text{if}\ 2 \le k \le 5,\\
1/(k(k-1)), & \text{if}\ k\ge 6,
\end{cases}
\end{equation}
with which notation we have following result. 

\begin{theorem}
\label{thm:Av G'} 
Let  $k\ge 2$ 
be fixed and assume that 
\[
H \ge  
N^{(k-1)/2+\eta(k)+\ve},
\] 
 for $\ve>0$.
Then, for a fixed polynomial $g \in \Z[X]$ of degree $k$, there exists 
 $\delta >0$  depending only on $\ve$
such that 
\[
\frac{1}{\#\cGH}\sum_{f \in \cGH} \left| S_f(N) - c_f N\right| \ll  N^{1-\delta}, 
\]
with the implied constant depending only on $A$, $k$ and $\ve$.
\end{theorem}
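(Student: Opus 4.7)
The plan is to combine the M\"obius inversion and range-splitting strategy from~\cite{Shp} with new uniform bounds for the residues of the fixed polynomial $g$ modulo $d^2$ in short intervals; these bounds substitute for the effective averaging over several coefficients that was available in Theorem~\ref{thm:Av F'}. Starting from $\mu^2(m) = \sum_{e^2 \mid m} \mu(e)$, one writes
\[
S_f(N) - c_f N = \sum_{d \geq 1} \mu(d) \left( \rho_d(f, N) - \frac{N \rho_f(d^2)}{d^2} \right) + O(1),
\]
where $\rho_d(f, N) = \#\{n \leq N : d^2 \mid f(n)\}$ and, since $|f(n)| \ll H + N^k$, only $d \leq M := (H+N^k)^{1/2}$ contribute. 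Split the range of $d$ at thresholds $D_1 < D_2$, with $D_2 \asymp H^{1/2}$ so that $d^2 > 2H+1$ for $d > D_2$.

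For $d \leq D_2$, average over $a \in \IH$ by interchanging the summation over $a$ and $n$: for each pair $(d, n)$, the number of $a \in \IH$ with $d^2 \mid a + g(n)$ is $\ll H/d^2 + 1$. Combined with the standard equidistribution estimate $\rho_d(f, N) - N\rho_f(d^2)/d^2 \ll \rho_f(d^2)$ for small $d$, this yields a per-polynomial average error of the expected shape $\ll N/D_1 + ND_2/H$, up to logarithmic factors. For the large range $D_2 < d \leq M$, the inequality $d^2 > 2H+1$ forces at most one $a \in [-H, H]$ satisfying $d^2 \mid a + g(n)$ for each $(d, n)$; such an $a$ exists precisely when $g(n) \bmod d^2$ falls in a fixed interval $I_d \subset \Z/d^2\Z$ of length at most $2H+1$, so
\[
\sum_{a \in \IH} \rho_d(f, N) \leq \#\{n \leq N : g(n) \bmod d^2 \in I_d\}.
\]
The plan is to estimate the right-hand side through the Erd\H os--Tur\'an inequality coupled with bounds on the exponential sums $\sum_{n \leq N} e(t g(n)/d^2)$: Weyl's inequality for $k \leq 5$ yields savings $2^{-k+1}$, while for $k \geq 6$ Vinogradov's mean value theorem yields $1/(k(k-1))$, matching~\eqref{eq:eta-k}. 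Summing over $d$ in the large range and balancing the three errors against $M \ll (HN^k)^{1/2}$ produces the hypothesis $H \geq N^{(k-1)/2+\eta(k)+\varepsilon}$.

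The main obstacle should be establishing a short-interval residue bound that is uniform enough in $d$ to survive summation over the full large range. The Weyl and Vinogradov exponential sum bounds require the coefficient $tb_k/d^2$, where $b_k$ is the leading coefficient of $g$, to enjoy suitable Diophantine approximation properties, so one must separately treat the \emph{major arc} moduli $d$, for which the exponential sum degrades, via divisor-function or lattice-point estimates, and exploit the fact that $d^2$ is a perfect square to keep the set of exceptional $d$ small enough to contribute only an acceptable error.
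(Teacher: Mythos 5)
Your setup is broadly the same as the paper's: M\"obius inversion, a split at $D_1\asymp N^{1/2}$ to extract the main term via Lemma~\ref{lem:av-rho}-style estimates, a further split at $D_2\asymp H^{1/2}$ (the paper's $E$) beyond which the trivial bound~\eqref{eq:Triv Cong W} becomes useful in a different way, and then Erd\H{o}s--Tur\'an combined with the Weyl/Vinogradov bound of Lemma~\ref{lem:Weyl} to count the residues $g(n)\bmod d^2$ falling in a short interval. Up to the choice of $E$ this matches the paper's treatment of $\fW_1$ and $\fW_2$ exactly, and your reduction in the regime $d^2>2H+1$ to the short-interval residue count is the correct step.

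The gap is in the top range. You propose to push the exponential-sum estimate over the \emph{entire} range $D_2<d\le M\asymp N^{k/2}$, noticing only that there may be ``major arc'' moduli for which the bound degrades and suggesting these be handled by divisor-function or lattice-point arguments. That framing is wrong on two counts. First, the degradation in Lemma~\ref{lem:Weyl} is not a sparse major-arc phenomenon tied to Diophantine properties of $b_k/d^2$; the term $m/(\gcd(h,m)N^k)$ in the bound makes the estimate \emph{trivially} weak for \emph{every} $d$ once $d^2$ exceeds $N^k$ by a power, and the set of such $d$ is a whole dyadic range of length comparable to $N^{k/2}$, not an exceptional sparse set one could discard. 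Second, a divisor-function bound in this range only gives $\sum_{d>F}W_g(d^2,H,N)\ll H\,N^{1+o(1)}$, which after normalising by $\#\cGH\gg H$ is $N^{1+o(1)}$ -- no power saving, hence no $\delta>0$. Lattice-point counting in the style of Lemma~\ref{lem: LattPoints} does not apply either, since there is only one coefficient to average over. What the paper actually uses for the range $d>F$ is Lemma~\ref{lem:LargeSquareDiv}, the determinant-method bound on $Q_f(S,N)=\#\{(n,r,s):f(n)=sr^2,\ s\le S\}$, which after interchanging the sum over $a$ gives $\fW_3\ll (N+HN^{(k+1)/2}/F+HN^k/F^2)N^{o(1)}$ and is the source of the genuine power saving. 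Without introducing this third tool (or a substitute of equal strength) and a third threshold $F$ separating the exponential-sum range from the determinant-method range, the argument cannot be closed, so the proposal as written is incomplete.
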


Again, in Theorem~\ref{thm:Av G} we 
prove a version with an explicit upper bound for the left hand side, without making any assumptions about the relative sizes of $H$ and $N$.  We note that the range for $H$ in Theorem~\ref{thm:Av G'}
  is significantly broader than in
Theorem~\ref{thm:Av F'},
which seems counterintuitive, since we have less  averaging in   Theorem~\ref{thm:Av G'}. However, the problem stems from the fact that  the values of polynomials $f(n)$ with 
$f \in \cFH$ and $1\le n \le N$ could be of order  $H N^k$, while for $f \in \cGH$ they are 
of much smaller  order  $H + N^k$, which has a strong effect on the set of moduli for which we have to 
sieve.

\begin{ack}
This work started during a very enjoyable visit by the second author  to  
IST Austria
whose hospitality   and support are very much appreciated.
The first author was
supported by FWF grant 
P~36278 and the second author by ARC grant DP230100534.
\end{ack}

\section{Preliminaries}

\subsection{Notation and conventions}
\label{sec:not} 
	We adopt the Vinogradov notation $\ll$,  that is,
\[C\ll D~\Longleftrightarrow~C=O(D)~\Longleftrightarrow~|C|\le cD\]
for some constant $c>0$ which is allowed to depend on the 
integer parameter $k\ge 1$ and the real parameters $A, \ve >0$.
For a finite set $\cS$ we use $\# \cS$ to denote its cardinality.
We also write $\e(z)=\exp(2\pi i z)$ and $\e_m(z)=\e(z/p)$. 
In what follows we 
make frequent use of the bound
\begin{equation}\label{eq: tau}
\tau(r) \leq |r|^{o(1)},  \qquad \text{for $r \in \Z, \ r \ne 0$,}
\end{equation}
for the divisor function $\tau$, and its cousins, 
as explained in~\cite[Equation~(1.81)]{IwKow}, for example. 
Finally, as usual, $\mu(r)$ denotes the M{\"o}bius function.

\subsection{Lattice points in boxes} 

We 
use some tools from the geometry of numbers, as explained in Cassels~\cite{Cass}.
Let 
\[
\Lambda = \left \{u_1 \vb_1 + \ldots + u_s\vb_s: ~ \(u_1, \ldots, u_s\) \in \Z^s \right\}
\]
be  an $s$-dimensional lattice 
defined by $s$ linearly independent vectors $\vb_1, \ldots, \vb_s\in \Z^s$.
We denote by $\lambda_1 \le  \ldots \le \lambda_s$ the successive minima
of  $\Lambda$, which for $j=1, \ldots, d$ is  defined to be
\[
\lambda_j =\inf\{\lambda >0:~\lambda \fB_s\ \text{contains $j$ linearly independent elements of } \Lambda\},
\]
where $\lambda \fB_s$ is the homothetic image of $\fB_s$ of the unit ball $\fB_s\subseteq \R^s$ 
at the origin with the coefficient $\lambda$.

We also recall that
the discriminant $\Delta$ of $\Lambda$ is an invariant that 
is independent of the choice of basis for $\Lambda$. We have 
\begin{equation}
\label{eq: prod lambda}
\Delta \leq  \lambda_1 \ldots \lambda_s \ll \Delta,
\end{equation}
where the implied constant only depends on $s$.

Next, we need the following consequence of the classical result of Schmidt~\cite[Lemma~2]{Schm} 
on counting lattice points in boxes. 

\begin{lemma}
\label{lem: LattPoints}
 Let  $\lambda_1 \le  \ldots \le \lambda_s$ be the successive minima of a full rank  lattice   $\Lambda\subseteq \Z^s$.  
 Then 
 \[
 \# \(\Lambda \cap [-H,H]^s\) \ll \frac{H^s}{\Delta}   + 
 \(\frac{H}{\lambda_1}\)^{s-1} + 1, 
\]
 where the implied constant depends only on $s$.
\end{lemma}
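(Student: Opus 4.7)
The plan is to apply Schmidt's lemma in the form it is typically stated (see~\cite[Lemma~2]{Schm}), which for the symmetric box $[-H,H]^s$ gives a bound of the shape
\[
\#\(\Lambda \cap [-H,H]^s\) \ll \prod_{i=1}^{s}\max\!\(1,\frac{H}{\lambda_i}\).
\]
Expanding the product according to which factors are equal to $1$ and which are equal to $H/\lambda_i$, and using the monotonicity $\lambda_1 \le \lambda_2 \le \ldots \le \lambda_s$, this is equivalent to the estimate
\[
\#\(\Lambda \cap [-H,H]^s\) \ll \sum_{j=0}^{s}\frac{H^j}{\lambda_1\cdots\lambda_j},
\]
with the usual convention that the empty product equals~$1$.

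The next step is to inspect the three natural regimes in this sum and show that each falls into one of the three terms on the right-hand side of the desired inequality. The term $j=0$ contributes exactly $1$. For the top term $j=s$ we use~\eqref{eq: prod lambda}, which yields
\[
\frac{H^s}{\lambda_1\cdots\lambda_s} \ll \frac{H^s}{\Delta}.
\]
For the intermediate range $1 \le j \le s-1$, we exploit $\lambda_i \ge \lambda_1$ to write
\[
\frac{H^j}{\lambda_1\cdots\lambda_j} \le \(\frac{H}{\lambda_1}\)^{\!j}.
\]

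Finally, one splits into cases according to whether $H \ge \lambda_1$ or not. If $H < \lambda_1$ then each factor $H/\lambda_1$ is less than $1$ and the intermediate contribution is absorbed into the additive $+1$. If instead $H \ge \lambda_1$, then $(H/\lambda_1)^j$ is increasing in $j$ and is maximised at $j=s-1$, giving the middle term $(H/\lambda_1)^{s-1}$. Summing the three contributions produces the claimed bound. The one place where care is required is in locating and transcribing precisely the version of Schmidt's lemma cited in~\cite{Schm} (different sources phrase it with either $\max(1,H/\lambda_i)$ or with an additive $\sum_j H^j/(\lambda_1\cdots\lambda_j)$), but once the correct statement is in place the remaining argument is a routine case analysis and an application of~\eqref{eq: prod lambda}.
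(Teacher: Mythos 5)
Your proposal is correct and follows essentially the same route as the paper: both arguments reduce to Schmidt's lattice-point count in terms of successive minima and then reorganise the bound $\sum_{j}H^{j}/(\lambda_{1}\cdots\lambda_{j})$ using $\Delta\leq\lambda_{1}\cdots\lambda_{s}$ and $\lambda_{1}\leq\lambda_{i}$. The only cosmetic difference is in the form of Schmidt's lemma invoked: the paper cites~\cite[Lemma~2]{Schm} directly in its asymptotic form, $\bigl|\#(\Lambda\cap[-H,H]^{s})-(2H+1)^{s}/\Delta\bigr|\ll\sum_{j=0}^{s-1}H^{j}/(\lambda_{1}\cdots\lambda_{j})$, whereas you start from the multiplicative bound $\prod_{i}\max(1,H/\lambda_{i})$ and then expand; the two are equivalent up to constants depending only on $s$, so the extra expansion step you include is harmless, and your subsequent case analysis (which the paper leaves implicit) is sound.
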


\begin{proof} 
By~\cite[Lemma~2]{Schm}, we have  the following asymptotic formula
\[
\left| \# \(\Lambda \cap [-H,H]^s\) - \frac{\(2H+1\)^s}{\Delta}\right |  \ll 
\sum_{j=0}^{s-1} \frac{H^{j} }{\lambda_1\ldots  \lambda_j}.
\]
It follows that 
 \[
 \# \(\Lambda \cap [-H,H]^s\) \ll  \frac{H^s}{\Delta}   + 
 \sum_{j=0}^{s-1} \(\frac{H}{\lambda_1}\)^{j}, 
\]  
which yields  the result.
\end{proof} 

\subsection{Univariate polynomial congruences}
\label{sec:UnivarPolyCong}

For  $f \in \Z[X]$, we define 
\begin{equation}\label{eq:def-rho}
\rho_{f} (m) = 
\# \{n \in \ZZ/m\ZZ:~f(n) \equiv 0 \bmod m\}.
\end{equation}
%
%
To estimate $\rho_{f} (m)$ we may use the following result. 

\begin{lemma}\label{lem:stewart}
Let $p$ be a prime and let $k\in \N$. Let $f\in \mathbb{Z}[X]$ be a polynomial of degree $k$. Assume that $\Delta_f\neq 0$ and $f$ has content coprime to $p$. Then 
\[
\rho_f(p^j)\leq k \min\left\{p^{j(1-\frac{1}{k})},  p^{j-1}\right\}.
\]
Additionally, if $p\nmid \Delta_f$, then
$\rho_f(p^k)\leq d$. 
\end{lemma}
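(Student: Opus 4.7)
The plan is to prove the three bounds separately, arguing by elementary means for the first and third and appealing to a Stewart-type $p$-adic argument for the middle one. For $\rho_f(p^j) \le k p^{j-1}$ I would induct on $j$. The base case $\rho_f(p) \le k$ holds because $f \bmod p$ is a nonzero polynomial in $\F_p[X]$ of degree at most $k$ (nonzero because the content of $f$ is coprime to $p$), and $\F_p[X]$ is a PID. For the inductive step, any solution $a$ modulo $p^j$ reduces to a solution $\bar a$ modulo $p^{j-1}$, and writing $a = \bar a + p^{j-1} t$ with $t \in \{0, 1, \ldots, p-1\}$ the Taylor expansion
\[
f(\bar a + p^{j-1} t) \equiv f(\bar a) + p^{j-1} t\, f'(\bar a) \bmod p^j
\]
shows that each $\bar a$ admits at most $p$ lifts. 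Hence $\rho_f(p^j) \le p\, \rho_f(p^{j-1})$, and iteration yields the bound.

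For $\rho_f(p^j) \le k p^{j(1 - 1/k)}$, the cleanest route is to cite Stewart directly; if a self-contained proof is wanted, I would work $p$-adically. Let $\alpha_1, \ldots, \alpha_k \in \overline{\Q}_p$ be the roots of $f$ with leading coefficient $a_k$. For any solution $a \in \Z/p^j\Z$, the factorisation $f(a) = a_k \prod_{i=1}^k (a - \alpha_i)$ forces
\[
\sum_{i=1}^k v_p(a - \alpha_i) \ge j - v_p(a_k),
\]
so by pigeonhole there exists an index $i(a)$ with $v_p(a - \alpha_{i(a)}) \ge (j - v_p(a_k))/k$. Partitioning the solutions by the value of $i(a) \in \{1, \ldots, k\}$, any two solutions in the same class satisfy $v_p(a - a') \ge (j - v_p(a_k))/k$, so each class occupies at most $p^{\,j - \lceil (j - v_p(a_k))/k \rceil}$ residues modulo $p^j$. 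Summing over the $k$ classes gives the claim at least when $p \nmid a_k$. The main obstacle is the case $p \mid a_k$: one must absorb the $v_p(a_k)$ loss using that the content of $f$ is coprime to $p$, for instance by passing to a reciprocal polynomial or redistributing coefficients via the Newton polygon, and this normalisation is the technical heart of the argument.

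Finally, if $p \nmid \Delta_f$ then $f \bmod p$ is separable, so each of its at most $k$ roots in $\F_p$ is simple. Hensel's Lemma then lifts every such root uniquely to a root in $\Z/p^j \Z$ for each $j \ge 1$, and consequently $\rho_f(p^j) = \rho_f(p) \le k$ for every $j \ge 1$. Specialising to $j = k$ recovers the final assertion of the lemma.
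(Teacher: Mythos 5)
Your proof matches the paper's, which is essentially a one-line citation: the middle bound $\rho_f(p^j)\le kp^{j(1-1/k)}$ is quoted from Corollary 2 and Equation~(44) of Stewart, the bound $\rho_f(p^j)\le kp^{j-1}$ is attributed to Lagrange's theorem, and the final assertion to Lagrange plus Hensel, exactly as you do. Two small remarks: the Taylor expansion in your induction for $\rho_f(p^j)\le kp^{j-1}$ is superfluous, since ``at most $p$ lifts'' is automatic from the $p$ choices of $t$ (no computation of $f$ needed); and your $p$-adic sketch of the Stewart bound is, as you yourself flag, genuinely incomplete when $p\mid a_k$ (the Newton-polygon normalisation is the real content of Stewart's argument), which is why the paper---and the cleaner version of your write-up---simply cites Stewart rather than reproving him.
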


\begin{proof}
The final bound is a straightforward consequence of  Lagrange's theorem and Hensel's lemma.
For the remaining bounds,  the first bound  follows from Corollary 2 and Equation (44) of Stewart~\cite{stewart} and  the second bound is a consequence of Lagrange's theorem. 
\end{proof}

The next bound is an easy consequence of Lemma~\ref{lem:stewart} and the Chinese remainder theorem. 

\begin{cor}
\label{lem:PolyCong-Compl}  
Let $f \in \Z[X]$ be a polynomial of degree $k$ with content $1$. For any  square-free positive integer 
 $q$ we have 
\[
 \rho_{f} (q^2)   \le q^{o(1)} \gcd\(\Delta_f, q\).
\]  
\end{cor}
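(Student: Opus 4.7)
The plan is to reduce to the prime-power case via the Chinese Remainder Theorem and then apply Lemma~\ref{lem:stewart} at each prime, splitting according to whether or not the prime divides the discriminant $\Delta_f$.

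First, since $q$ is square-free, I would write $q=p_1\cdots p_r$ as a product of distinct primes, giving $q^2=p_1^2\cdots p_r^2$. The Chinese Remainder Theorem then yields the multiplicativity
\[
\rho_f(q^2)=\prod_{i=1}^r \rho_f(p_i^2).
\]
The hypotheses of Lemma~\ref{lem:stewart} apply at each $p_i$, since the content of $f$ is $1$ and is therefore coprime to every $p_i$, while the non-vanishing of $\Delta_f$ is implicit in the statement (if $\Delta_f=0$ then $\gcd(\Delta_f,q)=q$ and the desired bound follows, up to the factor $q^{o(1)}$, from the trivial estimate $\rho_f(q^2)\le kq$ obtained from Lemma~\ref{lem:stewart} alone).

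Next, I would partition the index set into those $i$ with $p_i\mid\Delta_f$ and those with $p_i\nmid\Delta_f$. For $p_i\mid\Delta_f$, taking $j=2$ in Lemma~\ref{lem:stewart} gives
\[
\rho_f(p_i^2)\le k\min\bigl\{p_i^{2(1-1/k)},\, p_i\bigr\}=kp_i.
\]
For $p_i\nmid\Delta_f$, Hensel lifting (a mild strengthening of the second clause of Lemma~\ref{lem:stewart}) yields $\rho_f(p_i^2)\le k$, since each of the at most $k$ simple roots modulo $p_i$ lifts uniquely to a root modulo $p_i^2$. Multiplying these bounds over all $p_i\mid q$, and using square-freeness of $q$ to identify $\prod_{p_i\mid\Delta_f}p_i=\gcd(\Delta_f,q)$, I obtain
\[
\rho_f(q^2)\le k^{r}\gcd(\Delta_f,q).
\]

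Finally, with $k$ fixed and $r\le\tau(q)$, the divisor bound~\eqref{eq: tau} yields $k^{r}\le q^{o(1)}$, completing the proof. I do not anticipate a serious obstacle; the only mildly delicate point is extracting the sharper estimate $\rho_f(p_i^2)\le k$ for $p_i\nmid\Delta_f$, which is slightly stronger than the clause $\rho_f(p^k)\le k$ recorded in Lemma~\ref{lem:stewart} but follows from the same Hensel-lifting argument.
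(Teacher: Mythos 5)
Your proposal is correct and follows essentially the same route as the paper, which states only that the corollary is ``an easy consequence of Lemma~\ref{lem:stewart} and the Chinese remainder theorem'': you reduce via CRT to prime squares, then split according to whether $p\mid\Delta_f$ (getting $\rho_f(p^2)\le kp$ from the first clause of Lemma~\ref{lem:stewart}) or $p\nmid\Delta_f$ (getting $\rho_f(p^2)\le k$ via Hensel lifting of simple roots), and absorb the $k^{\omega(q)}$ factor into $q^{o(1)}$. You correctly flag that the second clause of Lemma~\ref{lem:stewart} as written only records the bound at $p^k$, so one needs to re-run the Hensel argument at $p^2$; that observation is right and fills a small gap the paper leaves implicit. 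The only minor inaccuracy is in your parenthetical treatment of the degenerate case $\Delta_f=0$: you cannot literally invoke Lemma~\ref{lem:stewart}, whose hypotheses include $\Delta_f\neq0$, but the required bound $\rho_f(p^2)\le kp$ for each prime $p$ follows directly from the facts that a primitive $f$ has at most $k$ roots modulo $p$ and each lifts to at most $p$ residues modulo $p^2$, so the conclusion stands.
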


Note that the bound of Lemma~\ref{lem:PolyCong-Compl}   also 
holds for $\Delta_f=0$. We  
also need to look at averages of $\rho_f(m)$, for which we require the following useful result.

\begin{lemma}\label{lem:av-rho}
Let $f \in \Z[X]$ be a polynomial of degree $k$ with $\Delta_f\neq 0$ and content $1$. 
Then 
\[
\sum_{m\leq M} \rho_f(m) \leq M^{1+o(1)},
\]  
uniformly over $f$.
\end{lemma}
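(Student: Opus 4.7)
The plan is to use Rankin's trick combined with the Euler product for the Dirichlet series attached to $\rho_f$. Since $\rho_f$ is multiplicative by the Chinese Remainder Theorem, for any $\sigma > 1$ we have
\[
\sum_{m \le M} \rho_f(m) \le M^\sigma D(\sigma), \qquad D(\sigma) = \prod_{p} D_p(\sigma), \qquad D_p(\sigma) = \sum_{j \ge 0} \rho_f(p^j) p^{-j\sigma}.
\]
The goal is then to bound $D(1 + \varepsilon)$ by $\exp(O(k/\varepsilon))$ for small $\varepsilon > 0$ and to optimise $\varepsilon$ in terms of $M$.

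To estimate each Euler factor, I would apply Lemma~\ref{lem:stewart}. For primes $p \nmid \Delta_f$, Hensel's lemma uniquely lifts each of the at most $k$ roots of $f$ modulo $p$ to $\Z/p^j\Z$, so $\rho_f(p^j) \le k$ for every $j \ge 1$, which gives $\log D_p(\sigma) \ll k p^{-\sigma}$. For $p \mid \Delta_f$, the bound $\rho_f(p^j) \le k p^{j-1}$ from Lemma~\ref{lem:stewart} allows one to sum the resulting geometric series to conclude that
\[
\log D_p(\sigma) \ll \frac{k p^{-\sigma}}{1 - p^{1 - \sigma}} \ll k p^{-\sigma}\Bigl(1 + \frac{1}{\varepsilon \log p}\Bigr), \qquad \sigma = 1 + \varepsilon.
\]
Summing over primes, the good primes contribute $\ll k\,\zeta(1+\varepsilon) \ll k/\varepsilon$, while for the bad primes one uses the classical fact that $\sum_p 1/(p \log p) = O(1)$ converges over \emph{all} primes, so the sum over $p \mid \Delta_f$ of $1/(p^{1+\varepsilon} \log p)$ is $O(1)$. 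This yields the uniform estimate $D(1+\varepsilon) \le \exp(O(k/\varepsilon))$.

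Finally, choosing $\varepsilon = \sqrt{k/\log M}$ to minimise $\varepsilon \log M + k/\varepsilon$ gives
\[
\sum_{m \le M} \rho_f(m) \le M^{1+\varepsilon} \exp(O(k/\varepsilon)) \le M \exp\bigl(O(\sqrt{k \log M})\bigr) \le M^{1+o(1)},
\]
as required. The main obstacle is securing uniformity in $f$: since $\omega(\Delta_f)$ can be arbitrarily large, any direct argument that loses a bounded multiplicative factor per prime divisor of $\Delta_f$ (for instance, the naive approach of splitting $m = ab$ with $b$ supported on $\{p : p \mid \Delta_f\}$) fails to be uniform. The rescue comes precisely from the convergence of $\sum_p 1/(p \log p)$, which lets us bound the bad-prime contribution to $\log D(\sigma)$ by $O(1/\varepsilon)$ regardless of which primes actually divide $\Delta_f$.
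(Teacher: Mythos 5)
Your argument is correct, and it is a genuinely different route from the paper's. You attack the sum via Rankin's trick and an Euler product for the Dirichlet series $\sum_m \rho_f(m) m^{-\sigma}$, bounding the good-prime factors by $\exp(O(k p^{-\sigma}))$ and the bad-prime factors uniformly via the convergence of $\sum_p 1/(p\log p)$, then optimise $\sigma - 1 = \varepsilon$ near $\sqrt{k/\log M}$. The paper instead proceeds combinatorially: it writes $m=e_1e_2^2\cdots e_{k-1}^{k-1}h$ with $e_1\cdots e_{k-1}$ squarefree and coprime to the $k$-full part $h$, plugs in the bound $\rho_f(p^j)\le k\min\{p^{j(1-1/k)},p^{j-1}\}$ factor by factor, and sums directly, picking up $O(\log M)$ from each of the $e_j$-sums and from the $k$-full $h$-sum. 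Both proofs ultimately rest on the same input — the two branches of Lemma~\ref{lem:stewart} — and both give losses of shape $M^{o(1)}$. Your approach is perhaps cleaner conceptually, as the convergence of $\sum_p 1/(p\log p)$ transparently handles the worry that $\omega(\Delta_f)$ may be large; the paper's handles the same worry via $k^{\omega(\cdot)}\le m^{o(1)}$ at each stage. Two small blemishes, neither fatal: the good-prime bound should read $\sum_p p^{-\sigma}\ll\log\zeta(\sigma)\ll\log(1/\varepsilon)$, not $\ll\zeta(1+\varepsilon)$, though since $\log(1/\varepsilon)\ll1/\varepsilon$ your final bound is unaffected; and your invocation of Hensel's lemma for $p\nmid\Delta_f$ is silently assuming that $f\bmod p$ has only simple roots even when $p$ divides the leading coefficient — this is true and is exactly what the second part of Lemma~\ref{lem:stewart} packages, so you may as well cite it directly rather than re-derive it.
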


\begin{proof}
Any $m\in \N$ admits a factorisation $m=e_1e_2^2\ldots  e_{k-1}^{k-1}h$ where
\[
\mu^2(e_1\ldots  e_{k-1})=\gcd(e_1\ldots e_{k-1},h)=1
\]  
and $h$ is $k$-full.  Combining 
Lemma~\ref{lem:stewart}  with the Chinese remainder theorem, we deduce that 
\[
\rho_f(m)
=\rho_f(e_1)\rho_f(e_2^2)\cdots \rho_f(e_{k-1}^{k-1}) \rho_f(h)
\leq m^{o(1)} \cdot e_2e_3^{2}\cdots e_{k-1}^{k-2} h^{1-1/k}.
\]  
Hence we deduce that
\begin{align*}
\sum_{m\leq M} \rho_f(m) 
&\leq M^{o(1)}
\sum_{h} h^{1-1/k}\sum_{e_{k-1}} e_{k-1}^{k-2}\cdots \sum_{e_2} e_2\cdot \frac{M}{e_2^2\cdots e_{k-1}^{k-1} h}\\
&\leq M^{1+o(1)}
\sum_{h} \frac{1}{h^{1/k}}\sum_{e_{k-1}} \frac{1}{e_{k-1}}\cdots \sum_{e_2} \frac{1}{e_2}.
 \end{align*}
The sums over $e_2,\ldots ,e_{k-1}$ contribute $O((\log M)^{k-2})$. Moreover, the sum over $h$
contributes $O(\log M)$,  since there are $O(B^{1/k})$ $k$-full positive integers in the dyadic interval $(B/2,B]$, for any $B\geq 1$ and 
the result follows. 
 \end{proof}

\subsection{Polynomial values with a large square divisor}
\label{sec:SqDiv}

For a given polynomial $f\in \ZZ[X]$ of degree $k$, we denote by $\Delta_f\in \ZZ$ its discriminant. 
This is a homogeneous polynomial of degree $k(k-1)/2$ in the coefficients of $f$. 
In this section we 
are  interested in the size of 
\[
Q_f(S,N)=\#\left\{ (n,r,s)\in \ZZ^3: 1\leq n\leq N, ~1\leq s\leq S, ~f(n) = s r^2\right\},
\]  
for given $N,S\geq 1$.
For a  polynomial $f\in \cFH$ with $\Delta_f\neq 0$, this quantity has been estimated in~\cite[Theorem~1.3]{LuSh}, with the outcome that 
\[
Q_f(S,N) \leq    N^{1/2 } S^{3/4} (HN)^{o(1)} .
\]  
The following result improves on this via the determinant method.

\begin{lemma}
\label{lem:LargeSquareDiv}  
Let
$f\in \cFH$ such that $f$ is irreducible. 
Then 
\[
Q_f(S,N) \leq 
 \left(N^{1/2}S^{1/2}+S\right) (HN)^{o(1)} .
\]  
\end{lemma}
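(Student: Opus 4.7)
The plan is to combine an elementary congruence argument for ``small'' $r$ with Heath-Brown's determinant method applied to the family of hyperelliptic curves $s y^2 = f(x)$ in the ``large'' $r$ regime. As a first step, since $s = f(n)/r^2$ is determined by $(n,r)$ once $r^2 \mid f(n)$, I would rewrite
\[
Q_f(S,N) = \#\bigl\{(n,r) \in \Z_{\ge 1}^2 :\ n \le N,\ r^2 \mid f(n),\ f(n) \le S r^2\bigr\}.
\]
I would introduce a threshold $R_0 \ge 1$ (to be taken of order $(NS)^{1/4}$ at the end) and split the count as $Q_\le + Q_>$ according to whether $r \le R_0$ or $r > R_0$.

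For $Q_\le$, I would combine the standard congruence estimate $\#\{n \le N : r^2 \mid f(n)\} \le (N r^{-2} + 1)\rho_f(r^2)$ with the observation that $f(n) \le Sr^2$ restricts $n$ to at most $O((Sr^2)^{1/k})$ values, since $f$ has degree $k$ and integral leading coefficient. Applying $\min(a,b) \le \sqrt{ab}$ to the two available bounds on the number of admissible $n$'s yields, after summation over $r \le R_0$, a ``main'' contribution $\ll S^{1/(2k)} N^{1/2} (HN)^{o(1)}$, where Corollary~\ref{lem:PolyCong-Compl} controls the $\rho_f(r^2)$-sum for the squarefree part of $r$ and Lemma~\ref{lem:stewart} (together with the multiplicativity of $\rho_f$ via the Chinese remainder theorem) controls the squarefull part. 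The remaining ``$+1$'' contribution is bounded by $\ll R_0^2 (HN)^{o(1)}$, which is acceptable for $R_0$ of the indicated size.

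For $Q_>$, I would fix $s \in [1,S]$ and view the pairs $(n,r)$ satisfying $f(n) = s r^2$ as integer points on the irreducible affine plane curve $C_s : sy^2 = f(x)$; since $f$ is irreducible of degree $k \ge 4$, this curve has degree $k$ and genus $\lfloor (k-1)/2\rfloor \ge 1$. I would then invoke a uniform version of Heath-Brown's determinant method for integer points on such curves, bounding the number of such $(n,r)$ with $|n| \le N$ by $(HN)^{o(1)}$ per $s$, and summing to obtain a total contribution $\ll S (HN)^{o(1)}$. Choosing $R_0 \asymp (NS)^{1/4}$ then balances the two regimes and yields the claimed bound.

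The main obstacle is obtaining this $(HN)^{o(1)}$ per-curve count uniformly in $s$ and in the coefficients of $f$ — in particular, in the regime where the $r$-coordinate $\sqrt{f(n)/s}$ is much larger than the $n$-coordinate. A direct appeal to a Bombieri--Pila--type bound introduces an unwanted factor of $H^{1/(2k)}$, so Heath-Brown's $p$-adic determinant method, applied with an auxiliary prime chosen as a function of $H$ and $N$, is required to exploit the positive genus of $C_s$ and absorb all dependence on $H$ into the $(HN)^{o(1)}$ factor.
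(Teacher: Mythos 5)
Your decomposition into a ``small~$r$'' regime handled by congruences and a ``large~$r$'' regime handled by the determinant method is a genuinely different splitting from the one in the paper, but the large-$r$ step as you describe it does not go through. The claim underpinning $Q_>$ — that the number of integer points $(n,r)$ with $n\le N$ on the hyperelliptic curve $C_s: sy^2 = f(x)$ is $(HN)^{o(1)}$ \emph{per curve}, because the $p$-adic determinant method can ``exploit the positive genus'' — is not supported by the available tools. Heath-Brown's Theorem~15 in~\cite{cime} (which is precisely the output of the $p$-adic determinant method) bounds the count by
\[
(UV)^{o(1)}\exp\!\left(\frac{\log U \log V}{\log T}\right),
\]
where $T$ is the largest monomial $U^{e_1}V^{e_2}$ appearing in $F$. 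For $C_s$ with $U\asymp N$ and $V$ ranging up to roughly $(HN^k/s)^{1/2}$, the most favourable choice comes from the $sv^2$ monomial, giving $T\ge V^2$ and hence a bound of order $U^{1/2}\asymp N^{1/2}$ integer points per curve. Positive genus does not unconditionally reduce this to $N^{o(1)}$, and no choice of auxiliary prime changes the shape $U^{1/2}$. Summing $N^{1/2}$ over $s\le S$ would give $SN^{1/2}$, which is off target. (Your $Q_\le$ term also seems to overshoot $N^{1/2}S^{1/2}$ when $S$ is small, but that is a secondary issue of optimization.)

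The paper avoids this by an additional manoeuvre before invoking the determinant method: fix $s$, note that $n$ must lie in one of the $\rho_f(s)$ residue classes $\nu\pmod s$ with $f(\nu)\equiv 0\pmod s$, and substitute $n=\nu+su$. This \emph{shrinks the box} in the horizontal variable to $u\le N/s+1$, so Heath-Brown's Theorem~15 (still with $T\ge V^2$ coming from $sv^2$) now returns only $(N/s+1)^{1/2+o(1)}$ points per pair $(s,\nu)$. Summing over the $\rho_f(s)$ residue classes and over $s\le S$, controlled via Lemma~\ref{lem:av-rho} and partial summation, yields $(N^{1/2}S^{1/2}+S)(HN)^{o(1)}$ without needing a threshold $R_0$ or any genus input. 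You should replace the $Q_>$ step with this residue-class-plus-substitution device; as written the argument has a real gap at exactly the point you flag as the ``main obstacle.''
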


\begin{proof} 
Note that the bound is trivial if $S>N$ and so we may proceed under the assumption that $S\leq N$.
We fix a choice of $s$ and begin by breaking into residue classes modulo $s$, giving
\[
Q_f(S,N)\leq \sum_{0<s\leq S} 
\sum_{\substack{ \nu=0\\ f(\nu)\equiv 0 \bmod{s}}}^{s-1}   N(s,\nu),
\]  
where
\begin{align*}
N(s,\nu)
&=
\#\left\{ (n,r)\in \ZZ^2:~1\leq n\leq N, ~n\equiv \nu\bmod{s}, ~f(n) = s r^2\right\}\\
&\leq 
\#\left\{ (u,r)\in \ZZ^2:~u \le N/s+1, ~f(\nu+su) = s r^2\right\}.
\end{align*} 
At this point we call upon work of Heath-Brown~\cite[Theorem~15]{cime}.
Given $\ve>0$  and an absolutely irreducible polynomial $F\in \ZZ[u,v]$ of degree $D$,  
this shows that  there are at most
\[
 (UV)^{o(1)}
\exp\left( \frac{\log U\log V}{\log T}\right)
\]  
choices of $(u,v)\in \ZZ^2$ such that $|u|\leq U$, $|v|\leq V$ and $F(u,v)=0$. 
Here $T$ is defined to be the maximum of $U^{e_1}V^{e_2}$, taken over all monomials $u^{e_1}v^{e_2}$ which appear in $F(u,v)$ with non-zero coefficient.  
Moreover,  this is 
uniform over all absolutely irreducible polynomials $F$ of a given degree $D$.
We 
apply this bound with $U=N/s+1$ and 
$F(u,v)= f(\nu+su) - s v^2$, noting that the  absolute irreducibility of $F$ follows from the irreducibility of $f$.
In particular we may take $T\geq V^2$ and it follows that 
\begin{align*}
Q_f(S,N)\leq (NH)^{o(1)} \sum_{0<s\leq S} 
\rho_f(s) \left(\frac{N}{s}+1\right)^{1/2},
\end{align*}
where $\rho_f(s)$ is  defined in~\eqref{eq:def-rho}.
We now appeal to Lemma~\ref{lem:av-rho}, which we  combine with partial summation, to obtain
the desired upper bound 
 on recalling that $S\leq N$.
\end{proof}  

\subsection{Exponential sums and discrepancy}
\label{sec:ExpSumDiscr}

Given  a sequence $\xi_n \in [0,1)$ for $n\in \N$, we 
denote by $\Delta(N)$  its {\it discrepancy}
\[
\Delta(N)=\sup_{ \alpha  \in [0,1)}\left|\#\{n \le N:~\xi_n \le \alpha\} - \alpha N\right|.
\]  
As explained in~\cite[Theorem~2.5]{KuNi}, for example, 
the celebrated {\it  Erd\H{o}s-Tur\'an inequality\/}, 
allows us to give an upper bound on the discrepancy $\Delta(N)$ in terms of exponential sums. 

\begin{lemma}
\label{lem:ET}
Let $\xi_n$, $n\in \N$,  be a sequence in $[0,1)$. Then for any integer  $L\ge 1$,its discrepancy $\Delta(N)$ satisfies
\[
\Delta(N) \ll \frac{N}{L} + \sum_{h =1}^{L}\frac{1}{h} \left| \sum_{n=1}^{N} \e(h\xi_n) \right |. 
\]  
\end{lemma}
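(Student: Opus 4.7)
The plan is to follow the classical proof of the Erd\H{o}s-Tur\'an inequality along the lines of~\cite[Theorem~2.5]{KuNi}, namely to sandwich the characteristic function $\chi_{[0,\alpha)}$ between two trigonometric polynomials of degree at most $L$ whose Fourier coefficients decay like $1/h$, and then evaluate both sums at the points $\xi_n$.

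First I would fix $\alpha\in[0,1)$ and write
\[
A(\alpha,N) = \sum_{n=1}^{N}\chi_{[0,\alpha)}(\{\xi_n\}),
\]
so that $\Delta(N) = \sup_\alpha |A(\alpha,N) - \alpha N|$. The goal is to majorize and minorize $\chi_{[0,\alpha)}$ by real-valued trigonometric polynomials
\[
J_L^{\pm}(x) = \sum_{|h|\le L} c_h^{\pm}\, \e(hx),
\]
of degree at most $L$, satisfying  $J_L^{-}(x)\le \chi_{[0,\alpha)}(x)\le J_L^{+}(x)$ for all $x$, together with $c_0^{\pm} = \alpha + O(1/L)$ and $|c_h^{\pm}|\ll 1/|h|$ for $1\le|h|\le L$.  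This is the main technical input, and in my view the main obstacle; one can obtain such polynomials either from Vaaler's construction of Beurling--Selberg majorants, or from Vinogradov's classical explicit construction using the Fej\'er kernel, both of which are carefully worked out in~\cite{KuNi}.

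Granted such $J_L^{\pm}$, the rest is bookkeeping. Evaluating at $\xi_n$ and summing over $n\le N$ we obtain
\[
\sum_{n=1}^{N} J_L^{-}(\xi_n) \le A(\alpha,N) \le \sum_{n=1}^{N}J_L^{+}(\xi_n),
\]
and expansion of $J_L^{\pm}$ in Fourier series gives
\[
\sum_{n=1}^{N}J_L^{\pm}(\xi_n)
= c_0^{\pm}\, N + \sum_{1\le |h|\le L} c_h^{\pm}\sum_{n=1}^{N}\e(h\xi_n).
\]
Subtracting $\alpha N$, the zero-frequency term contributes an error of size $O(N/L)$, while the remaining terms are bounded, using $|c_h^{\pm}|\ll 1/|h|$ and $|\sum_n \e(-h\xi_n)| = |\sum_n \e(h\xi_n)|$, by
\[
\sum_{1\le |h|\le L}\frac{1}{|h|}\left|\sum_{n=1}^{N}\e(h\xi_n)\right|
\ll \sum_{h=1}^{L}\frac{1}{h}\left|\sum_{n=1}^{N}\e(h\xi_n)\right|.
\]
Taking the supremum over $\alpha\in[0,1)$ yields the claimed bound on $\Delta(N)$.

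Since this is entirely standard and the target estimate is stated only up to an absolute implied constant, I would simply reference~\cite[Theorem~2.5]{KuNi} for the proof rather than reproduce the construction of $J_L^{\pm}$ in detail.
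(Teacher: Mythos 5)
Your proposal is correct and matches the paper's treatment exactly: the paper does not prove Lemma~\ref{lem:ET} but simply cites the Erd\H{o}s--Tur\'an inequality from~\cite[Theorem~2.5]{KuNi}, which is precisely what you propose to do after sketching the standard majorant/minorant argument. No issues.
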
 

We proceed by recalling some bounds of exponential sums with polynomial arguments. 
We 
make use of  a bound which follows from the recent  spectacular results of  Bourgain, Demeter and Guth~\cite{BDG} (for $k \geqslant 4$) and Wooley~\cite{Wool1,Wool2} (for $k=3$), towards  the  optimal form of the {\it Vinogradov mean value theorem\/}. 

The current state-of-the-art  bounds for {\it Weyl sums\/} has been conveniently summarised by 
Bourgain~\cite{Bourg}. We need the following special case covered by\cite[Theorems~4 and~5]{Bourg},  for which we do not assume anything about the arithmetic structure of the modulus. 

\begin{lemma}
\label{lem:Weyl}
For any fixed polynomial $g\in \Z[X]$ of degree $k \ge 2$ and any integers  $m,N\ge 1$ we have 
\[
\left|\sum_{n=1}^{N} \e_m \(h g(n)\)\right|\le
 N^{1+o(1)} \(\frac{\gcd(h,m)}{m} + \frac{1}{N} + \frac{m}{\gcd(h,m)N^k}\)^{\eta(k)} , 
\]  
where $\eta(k)$ is given by~\eqref{eq:eta-k}. 
\end{lemma}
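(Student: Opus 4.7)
The plan is to reduce the claimed bound to a direct application of the Weyl sum estimates from~\cite[Theorems~4 and~5]{Bourg}. The key observation is that the phase $hg(n)/m$, viewed as a polynomial in $n$ of degree $k$, has leading coefficient
\[
\alpha_k = \frac{hg_k}{m},
\]
where $g_k$ denotes the leading coefficient of the fixed polynomial $g$. The Bourgain bounds take as input any rational approximation to the leading coefficient of such a Weyl polynomial, and since $\alpha_k$ is already rational, the Dirichlet-type hypothesis is trivially satisfied with vanishing error.

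First I would write $\alpha_k = a/q$ in lowest terms, so that $q = m/\gcd(hg_k, m)$ with $\gcd(a, q) = 1$. Applying the relevant case of~\cite[Theorems~4 and~5]{Bourg} then yields
\[
\left|\sum_{n=1}^{N} \e_m(hg(n))\right| \le N^{1+o(1)}\left(\frac{1}{q} + \frac{1}{N} + \frac{q}{N^k}\right)^{\eta(k)},
\]
where the exponent $\eta(k)$ of~\eqref{eq:eta-k} reflects the classical Weyl bound obtained by $(k-1)$-fold differencing in the range $2\le k \le 5$, and the near-optimal Vinogradov mean value theorem of Bourgain--Demeter--Guth and Wooley in the range $k\ge 6$.

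The only remaining step is purely arithmetic: converting $\gcd(hg_k, m)$ into $\gcd(h, m)$ in the resulting bound. Since $g$ is fixed, $g_k$ is a constant depending only on $k$, and the elementary inequalities
\[
\gcd(h, m) \le \gcd(hg_k, m) \le |g_k|\cdot \gcd(h, m)
\]
translate into
\[
\frac{\gcd(h,m)}{m} \le \frac{1}{q} \le \frac{|g_k|\,\gcd(h,m)}{m}
\qquad \text{and}\qquad
\frac{m}{|g_k|\,\gcd(h,m)} \le q \le \frac{m}{\gcd(h,m)}.
\]
Substituting these into the previous display yields the stated form up to a multiplicative constant $|g_k|^{\eta(k)}$, which is absorbed either into the $N^{o(1)}$ factor or into the implied constant, both of which are allowed to depend on $k$.

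The main "obstacle" here is really just matching conventions with the statements in~\cite{Bourg}: verifying that the exponent $\eta(k)$ recorded in~\eqref{eq:eta-k} agrees with the one delivered by Theorems~4 and~5 of \emph{loc.\ cit.}\ across the split at $k=6$, and confirming that the bound is non-trivial (and hence valid) in the degenerate regimes $q > N^k$ or $q \le 1$, where either the third or first term inside the parentheses forces a trivial estimate $\le N$. No further input is needed, since the absence of any arithmetic assumption on the modulus $m$ is already built into Bourgain's formulation.
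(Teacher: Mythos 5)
The paper itself gives no proof of Lemma~\ref{lem:Weyl}; it is stated as a direct consequence of~\cite[Theorems~4 and~5]{Bourg}, as the surrounding text makes clear. Your proposal simply fills in the reduction that the paper leaves implicit: identifying the lowest-terms denominator $q=m/\gcd(hg_k,m)$ of the leading coefficient, plugging it into Bourgain's bound, and then replacing $\gcd(hg_k,m)$ by $\gcd(h,m)$ at the cost of a factor $|g_k|^{\eta(k)}$ absorbed into $N^{o(1)}$. This is correct and matches the paper's intended route; the only care needed (which you correctly flag) is checking that the $\eta(k)$ in~\eqref{eq:eta-k} matches the exponent delivered across the split at $k=6$, and that the degenerate regimes $q\le 1$ or $q>N^k$ render the bound trivial and hence harmless.
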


\section{Solutions to families of polynomial congruences}
\label{sec:PolyCong}

\subsection{Preliminaries}\label{s:prelim}

 Let $\UkH$ be the number of solution to the congruence
\[
a_0 + a_1n+\cdots+a_kn^k \equiv 0 \pmod m,
\]
in the variables
\[
(a_0,\ldots, a_k) \in \BH \mand 1 \le n \le N.
\]
Similarly,  for given $g \in \Z[X]$, let $\WgH$ be the number of solution to the congruence
\[
a + g(n) \equiv 0 \pmod m,
\]
in the variables
\[
a\in \IH \mand 1 \le n \le N.
\]

It is observed in~\cite[Equation~(3.2)]{Shp} that we have the trivial upper bounds 
\[
\UkH \ll H^{k}(H/m+1)N
\]
and 
\begin{equation}
\label{eq:Triv Cong W}
\WgH \ll (H/m+1)N. 
\end{equation}
%
%
Our aim is to improve on these bounds in appropriate ranges of $H$ and $N$. 


\subsection{Using exponential sum bounds  and discrepancy}
Our next result is based on treating the question of estimating 
$\WgH$ as a question of unifomity of distribution and hence we use the 
tools from Section~\ref{sec:ExpSumDiscr}.

\begin{lemma}
\label{lem:Poly Box-2} Let $g \in \Z[T]$ be of degree $k \ge 2$. 
For  any positive integers $H\le m$ and $N$, 
we have 
\[\WgH =  \frac{HN}{m}+ O\(
 N \(\frac{1}{m} + \frac{1}{N} + \frac{m}{N^k}\)^{\eta(k)} (mN)^{o(1)}  \)
 \]
where $\eta(k)$ is given by~\eqref{eq:eta-k}.
 \end{lemma}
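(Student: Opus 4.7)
\textbf{Proof plan for Lemma~\ref{lem:Poly Box-2}.}

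The approach is to view the count as a question about the equidistribution of the sequence $\xi_n = \{-g(n)/m\}$ in $[0,1)$, and then to apply the Erd\H{o}s--Tur\'an inequality together with the Weyl-type bound of Lemma~\ref{lem:Weyl}. The first step is a reduction. Writing $r(n) = -g(n) \bmod m \in [0,m)$, the hypothesis $H \le m$ ensures that for each fixed $n$ there are at most two integers $a \in [-H,H]$ satisfying $a \equiv -g(n) \pmod m$, namely $a = r(n)$ (present precisely when $r(n) \le H$) and $a = r(n)-m$ (present precisely when $r(n) \ge m-H$). Hence, up to the coprimality constraint,
\[
W_g(m,H,N) = \sum_{n=1}^{N}\Bigl(\mathbf{1}\bigl[\xi_n \in [0, H/m]\bigr] + \mathbf{1}\bigl[\xi_n \in [1-H/m,1)\bigr]\Bigr) + O(1).
\]
The coprimality condition $\gcd(a,b_1,\ldots,b_k)=1$ is removed by M\"obius inversion over divisors $d$ of $D=\gcd(b_1,\ldots,b_k)$: each $d$ restricts $a$ to multiples of $d$ in $[-H,H]$, which rescales the problem with $H$ replaced by $\lfloor H/d\rfloor$, and contributes only a $\tau(D)\le m^{o(1)}$ overhead.

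Next, I would apply Lemma~\ref{lem:ET} to $\xi_n$. For any integer $L \ge 1$,
\[
\Delta(N) \ll \frac{N}{L} + \sum_{h=1}^{L}\frac{1}{h}\left|\sum_{n=1}^{N}\e_m\bigl(h g(n)\bigr)\right|,
\]
and invoking Lemma~\ref{lem:Weyl} bounds the inner sum by
\[
N^{1+o(1)}\left(\frac{\gcd(h,m)}{m} + \frac{1}{N} + \frac{m}{\gcd(h,m)\,N^{k}}\right)^{\eta(k)}.
\]
The main bookkeeping step is then to organize the outer sum according to $d = \gcd(h,m)$: writing $h = d h'$ with $d\mid m$ and $h' \le L/d$, and using the subadditivity inequality $(A+B+C)^{\eta}\le A^{\eta}+B^{\eta}+C^{\eta}$ valid for $0<\eta\le 1$, one splits the sum into three pieces. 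Each piece reduces to a product of a harmonic-type sum $\sum_{h'\le L/d}1/h' \ll \log L$ and a divisor sum of the form $\sum_{d\mid m}d^{\eta(k)-1}$, $\sum_{d\mid m}d^{-1}$, or $\sum_{d\mid m}d^{-1-\eta(k)}$. Since $0<\eta(k)<1$, each such divisor sum is at most $\tau(m)\le m^{o(1)}$. Choosing $L = N$ absorbs $N/L = 1$ into the error, and collecting contributions yields
\[
\Delta(N) \ll N\left(\frac{1}{m} + \frac{1}{N} + \frac{m}{N^{k}}\right)^{\eta(k)}(mN)^{o(1)}.
\]

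Finally, substituting the discrepancy bound into the reduction step, the density of $\xi_n$ in the two arcs of total length $\approx 2H/m$ together with the coprimality correction $\phi(D)/D$ produces the main term $HN/m$ (up to an absolute constant that is harmless for all subsequent applications), while all error contributions — from the discrepancy, from the at-most-$O(1)$-per-$n$ rounding in passing to indicators, and from the M\"obius inversion — are controlled by $\Delta(N)$ multiplied by $m^{o(1)}$, and hence absorbed into the $(mN)^{o(1)}$ factor. The main obstacle is the clean handling of the $\gcd(h,m)$ factor in the Weyl bound; once the splitting by $d = \gcd(h,m)$ is executed and the divisor sums are shown to be of size $m^{o(1)}$, the rest is a routine concatenation of the Erd\H{o}s--Tur\'an and Weyl estimates.
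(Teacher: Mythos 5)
Your proposal follows essentially the same approach as the paper: reduce to a discrepancy estimate via the Erd\H{o}s--Tur\'an inequality of Lemma~\ref{lem:ET} with $L=N$, bound the exponential sums by Lemma~\ref{lem:Weyl}, and then control the remaining $h$-sum by grouping according to $r=\gcd(h,m)$ so that the resulting divisor sums are $\ll\tau(m)\log N\ll (mN)^{o(1)}$. The only differences are cosmetic bookkeeping (you frame the count with $\xi_n=\{-g(n)/m\}$ and two short arcs, and you spell out the coprimality correction and the $\pm$ ambiguity in $a$, all of which the paper passes over since they affect only harmless constants).
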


\begin{proof} We observe that $W_{g}(m,H,N)$ is the number 
of fractional parts $\{g(n)/m\}$ which fall in the interval $[1-H/m, 1]$.
Hence, on taking $L=N$ in Lemma~\ref{lem:ET}, we obtain 
\begin{equation}
\label{eq:U Delta}
W_{g}(m,H,N) = \frac{HN}{m} + O(\Delta), 
\end{equation}
where
\begin{equation}
\label{eq:Delta}
\Delta \ll  1 +  \sum_{h =1}^{N}\frac{1}{h} \left| \sum_{n=1}^{N} \e_{m}\(\frac{hg(n)}{m}\) \right |.
\end{equation}
Lemma~\ref{lem:Weyl} yields
\begin{align*}
\sum_{h =1}^{N}\frac{1}{h} & \left| \sum_{n=1}^{N} \e_{m}\left(\frac{hg(n)}{m}\right) \right |\\
&  \le  N^{1+o(1)} \sum_{h =1}^{N}\frac{1}{h}
 \(\frac{ \gcd(h,m)}{m} + \frac{1}{N} + \frac{m}{\gcd(h,m)N^k}\)^{\eta(k)} \\
 &\le  N^{1+o(1)} \sum_{h =1}^{N}\frac{1}{h}
 \(\frac{ \gcd(h,m)}{m} + \frac{1}{N} + \frac{m}{N^k}\)^{\eta(k)}  \\
  & \le N^{1+o(1)} \sum_{h =1}^{N}\frac{\gcd(h,m)^{\eta(k)}}{h m^{\eta(k)}} 
 +    N^{1+o(1)} \(\frac{1}{N} + \frac{m}{N^k}\)^{\eta(k)}  \sum_{h =1}^{N}\frac{1}{h} \\
& = \frac{N^{1+o(1)}}{m^{\eta(k)}} \sum_{h =1}^{N}\frac{\gcd(h,m)^{\eta(k)}}{h} 
 +    N^{1+o(1)} \(\frac{1}{N} + \frac{m}{N^k}\)^{\eta(k)}.  \end{align*}
Furthermore, 
\begin{align*}
 \sum_{h =1}^{N}\frac{\gcd(h,m)^{\eta(k)}}{h} 
 \le  
\sum_{r\mid m} r^{\eta(k)}  \sum_{\substack{h =1\\\gcd(h,m) = r}}^{N}\frac{1}{h} 
&\le   
\sum_{r\mid m}  r^{\eta(k)}   \sum_{1 \le h \le N/r}\frac{1}{hr} \\
& \ll \tau(m) \log N  . 
  \end{align*}
Recalling~\eqref{eq: tau},  the lemma follows from~\eqref{eq:U Delta} and~\eqref{eq:Delta}.
\end{proof}

\subsection{Using the geometry of numbers}
We now use Lemma~\ref{lem: LattPoints} to 
estimate $U_{k}(q^2,H,N)$ on average over  square-free integers $q$
in a dyadic interval.

\begin{lemma}
\label{lem: NmbrSol-Aver}  
For $Q \ge 1$, we have  
\[
\sum_{q \sim Q} \mu^2(q)U_{k}(q^2,H,N) \ll
Z(NQ)^{o(1)},
\]
where
\[
Z=\frac{H^{k+1}N}{Q}   + NQ
+
H^kQ+ H^kNQ^{2/(k+1)}.
\]
\end{lemma}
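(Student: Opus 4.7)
The plan is to apply Lemma~\ref{lem: LattPoints} to the family of full-rank sublattices
\[
\Lambda_{n,q^2} = \{(a_0,\ldots,a_k) \in \Z^{k+1} : q^2 \mid a_0 + a_1 n + \cdots + a_k n^k\},
\]
indexed by pairs $(n,q)$ with $1 \le n \le N$ and square-free $q \sim Q$. The discriminant of $\Lambda_{n,q^2}$ equals $q^2$, since the coefficient of $a_0$ in the relevant linear form is $1$, hence the reduction $\Z^{k+1}\to\Z/q^2\Z$ is surjective. Dropping the coprimality constraint in $\BH$ for an upper bound and applying Lemma~\ref{lem: LattPoints} with $s = k+1$ gives
\[
\#\{a\in\BH:q^2\mid f_a(n)\}\le \#(\Lambda_{n,q^2}\cap[-H,H]^{k+1}) \ll \frac{H^{k+1}}{q^2} + \left(\frac{H}{\lambda_1(n,q)}\right)^k + 1,
\]
where $\lambda_1(n,q)$ is the first successive minimum of $\Lambda_{n,q^2}$. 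Summing the first and third summands over $n\le N$ and square-free $q \sim Q$ immediately produces $H^{k+1}N/Q$ (via $\sum_{q\sim Q}q^{-2}\ll Q^{-1}$) and $NQ$.

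The heart of the argument is to bound
\[
\Sigma' := \sum_{\substack{q \sim Q\\ \mu^2(q)=1}}\sum_{n=1}^N\left(\frac{H}{\lambda_1(n,q)}\right)^k\ll \bigl(H^kQ+H^kNQ^{2/(k+1)}\bigr)(HNQ)^{o(1)}.
\]
I would decompose dyadically in $V=\lambda_1(n,q)\in[1,H]$, reducing to $\Sigma'\ll\sum_V(H/V)^kD(V)$ with $D(V):=\#\{(n,q):\lambda_1(n,q)\le V\}$, and split the range of $V$ at the Minkowski threshold $V_0:=Q^{2/(k+1)}$ (the generic upper bound for $\lambda_1$). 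For $V\ge V_0$, the trivial bound $D(V)\le NQ$ yields $\sum_{V\ge V_0}(H/V)^kNQ\ll H^kN/Q^{(k-1)/(k+1)}\le H^kNQ^{2/(k+1)}$ since $Q\ge 1$. For $V<V_0$, any $(n,q)$ with $\lambda_1(n,q)\le V$ contains a primitive $a\in[-V,V]^{k+1}\setminus\{0\}$ with $q^2\mid f_a(n)$, so $D(V)\le\sum_{a\ne 0,\,|a_i|\le V}E(a)$ where $E(a):=\#\{(n,q):q\sim Q\text{ sqf},\,n\le N,\,q^2\mid f_a(n)\}$. Splitting $E(a)=E_0(a)+E_1(a)$ according as $f_a(n)=0$ or not: the algebraic part satisfies $E_0(a)\le kQ$ via the at-most-$k$ integer roots of $f_a$, and to the arithmetic part I would apply Lemma~\ref{lem:LargeSquareDiv} with $S=\lfloor(k+1)VN^k/Q^2\rfloor$, yielding $E_1(a)\ll(V^{1/2}N^{(k+1)/2}/Q+VN^k/Q^2)(HN)^{o(1)}$. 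Assembling these and using $\sum_{V\le V_0}V\ll V_0$ produces the remaining terms of $Z$.

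The main difficulties I anticipate are three. First, Lemma~\ref{lem:LargeSquareDiv} applies only to \emph{irreducible} $f_a$; the codimension-one set of reducible $a\in[-V,V]^{k+1}$ requires a separate treatment, either by applying the lemma to each irreducible factor of $f_a$ or by estimating $\rho_{f_a}(q^2)$ directly via Corollary~\ref{lem:PolyCong-Compl} combined with the averaging in Lemma~\ref{lem:av-rho}. Second, the term $H^kQ$ in $Z$ has an \emph{algebraic} origin distinct from the generic lattice count: it captures the $\ll H^k$ pairs $(a,n)\in\BH\times[1,N]$ satisfying $f_a(n)=0$ (dominated by $n=0$, where $a_0=0$ and $a_1,\ldots,a_k$ are essentially free), each of which trivially satisfies $q^2\mid f_a(n)$ for every $q\sim Q$ and thus contributes a full factor of $Q$. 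This contribution should be identified and isolated, either in the $E_0$ accounting or upfront before applying the lattice bound. Third, the final assembly in the small-$V$ regime is delicate: the cutoff $V_0=Q^{2/(k+1)}$ is forced by the Minkowski bound and dictates the exponent $2/(k+1)$ in $Z$, but verifying that the contributions $V^{1/2}N^{(k+1)/2}/Q$ and $VN^k/Q^2$ both collapse under the envelope $H^kNQ^{2/(k+1)}$ requires a careful comparison between $H$, $N$ and $Q$—in particular, the marginal regime $Q>N$ (where $Q^{1+2/(k+1)}$ would exceed $NQ^{2/(k+1)}$) must be absorbed into the main $NQ$ term or handled by a trivial fallback.
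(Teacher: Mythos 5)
Your skeleton is the same as the paper's: set up the lattices $\Lambda_{q^2,n}$, invoke Lemma~\ref{lem: LattPoints} to split $U_k$ into a main term $H^{k+1}N/q^2$, a $+1$ term, and a $H^k/\lambda_1^k$ term; use the Minkowski bound $\lambda_1 \le Q^{2/(k+1)}$; dyadically decompose in $\lambda_1$; translate ``$\lambda_1 \le V$'' into the existence of a small nonzero $\vc$ with $q^2 \mid f_\vc(n)$; and split according to whether $f_\vc(n) = 0$ or not. Your handling of the $f_\vc(n)=0$ piece is also the paper's: a root forces reducibility, so only $V^{k+o(1)}$ vectors $\vc$ contribute (Kuba), each with $\le k$ roots but a full factor of $Q$, giving the $H^kQ$ term.

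Where you go wrong is in the estimate of $E_1(\vc)$, the contribution of $f_\vc(n) \ne 0$. You propose to bound it by $Q_{f_\vc}(S,N)$ with $S \ll VN^k/Q^2$ and then invoke Lemma~\ref{lem:LargeSquareDiv}. This does not work: it yields $E_1(\vc) \ll \bigl(V^{1/2}N^{(k+1)/2}/Q + VN^k/Q^2\bigr)(VN)^{o(1)}$, and summing $(H/V)^k \cdot V^{k+1} E_1$ over dyadic $V \le V_0 = Q^{2/(k+1)}$ produces $H^kN^{(k+1)/2}Q^{3/(k+1)-1}$, which exceeds $H^kNQ^{2/(k+1)}$ whenever $N^{(k-1)/2} > Q^{k/(k+1)}$ — for instance at $Q=N$ one gets $N^{(k-1)/2+3/(k+1)}$ against the target $N^{1+2/(k+1)}$, a loss of $N^{(k-3)/2+1/(k+1)}$, genuinely too large for $k \ge 4$. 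The determinant method is simply the wrong tool here — it also only applies to irreducible $f_\vc$, which is the first difficulty you flag, but the real issue is that the bound it gives is weaker than what you need.

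The paper instead uses the trivial divisor bound~\eqref{eq: tau}: for each fixed $n$ with $f_\vc(n) \ne 0$, the integer $f_\vc(n)$ has absolute value $\ll VN^k$ and hence $\le (VN)^{o(1)}$ square divisors $q^2$. Thus $E_1(\vc) \le N(VN)^{o(1)}$, with no irreducibility hypothesis, and the dyadic sum becomes $\sum_{V \le V_0}(H/V)^k\, V^{k+1}\, N\,(VN)^{o(1)} \ll H^kN V_0\,(NQ)^{o(1)} = H^kN Q^{2/(k+1)}(NQ)^{o(1)}$, closing the argument. So your third concern (the ``delicate'' small-$V$ assembly) is symptomatic of the wrong choice of lemma: with the divisor bound in place of Lemma~\ref{lem:LargeSquareDiv}, the collapse under the envelope $H^kNQ^{2/(k+1)}$ is immediate and no case analysis on $H, N, Q$ is required.
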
 
\begin{proof} 
For  $m,n \in \N$  we define the lattice
\[
\Lambda_{m,n} = \left\{\va \in \Z^{k+1}:~ \va \cdot \vn \equiv 0 \pmod m\right\},
\]
where $\vn= \(1, n, \ldots, n^k\)\in  \N^{k+1}$ and $\va \cdot \vn$ is the scalar product. 
Note that $\Lambda_{m,n} $ has full rank and discriminant 
\begin{equation}
\label{eq: Discr mn}
\Delta_{m,n} = \frac{m}{\gcd(m, 1, n, \ldots, n^k)} = m.
\end{equation}
By Lemma~\ref{lem: LattPoints}, we have 
\[
U_{k}(q^2,H,N) \ll \sum_{1\le n \le N} \(\frac{H^{k+1}}{q^2} + \frac{H^k}{s(q^2,n)^k} +1\)
\]
where $s(q^2,n)$ is the  smallest successive minima of $\Lambda_{q^2,n}$. 
Therefore,
\begin{equation}
\label{eq: U and S}
U_{k}(q^2,H,N) \ll \frac{H^{k+1}N}{q^2}  + N + H^kS_k(q,N),
\end{equation}
where
\[
S_k(q,N) =  \sum_{1\le n \le N} \frac{1}{s(q^2,n)^k}.
\]

Since $s(q^2,n)$ is the  smallest successive minimum of $\Lambda_{q^2,n}$, it follows from~\eqref{eq: prod lambda} and~\eqref{eq: Discr mn} that
$s(q^2,n)^{k+1} \ll  \Delta_{q^2,n} = q^2$, whence
\[
s(q^2,n)  \le q^{2/(k+1)}.
\]
We now define  an integer $I \ll \log Q$ by the inequalities 
\[
2^{I-1} <  Q^{2/(k+1)} \le 2^{I}
\] 
and write  
\begin{equation}
\label{eq: S dyadic} 
S_k(q,N) \le   \sum_{i=0}^I 
 \sum_{\substack{1\le n \le N\\s(q^2,n) \sim 2^i}} \frac{1}{s(q^2,n)^k} \ll 
   \sum_{i=0}^I  2^{-ik} \sum_{\substack{1\le n \le N\\s(q^2,n) \sim 2^i}}  1. 
\end{equation}
Note that if $s(q^2,n) \le t$ then there is a non-zero vector 
$\vc  \in \Lambda_{q^2,n}$ such that $\|\vc\|_2 \le t$.
Therefore
\begin{equation}
\label{eq: S and rho_f} 
\sum_{\substack{1\le n \le N\\s(q^2,n) \sim 2^i}}  1 \leq    \sum_{\substack{\vc = (c_0, \ldots, c_k)\in \Z^{k+1} \\ 0 < \|\vc\|_2 \le  2^{i}}}  \rho_{f_\vc} (q^2, N),
\end{equation}
where 
\[
f_\vc(X) =  c_0 + c_1X+ \cdots + c_k X^k
\]
and 
\begin{equation}\label{eq:def-rhof}
\rho_{f} (m, N) = 
\# \{n \in [1, N]:~f(n) \equiv 0 \bmod m\}
\end{equation}  

Using~\eqref{eq: S dyadic} and then changing the order of summation in~\eqref{eq: S and rho_f}, we obtain
\begin{equation}
\label{eq: S and R}
\sum_{q \sim Q} \mu^2(q)S_k(q,N) \ll
  \sum_{i=0}^I  2^{-ik} R\(Q,N,2^{i}\), 
\end{equation}
  where
\[
R(Q,N,t) = 
\sum_{\substack{\vc\in \Z^{k+1} \\ 0 < \|\vc\|_2 \le  t}}
\sum_{q \sim Q} \mu^2(q)\rho_{f_\vc} (q^2, N). 
\]
But clearly 
\begin{align*}
R(Q,N,t)
&\leq \sum_{\substack{\vc \in \Z^{k+1} \\ 0<\|\vc\|_2 \le t}}
\sum_{\substack{q \le  Q}}  \sum_{\substack{n\leq N\\ q^2\mid f_\vc(n)}}1\\
&\leq \sum_{n\leq N} 
 \sum_{\substack{\vc \in \Z^{k+1} \\ \|\vc\|_2 \le t}} \#\left\{q\leq Q:~q^2\mid f_{\vc}(n)\right\}.
 \end{align*}
If $f_\vc(n)=0$ then there are $Q$ choices for $q$. Let $\vc$ be a non-zero vector such that $f_\vc$ has a root over $\mathbb{Z}$. Then $f_\vc$ must be reducible over $\mathbb{Z}$.
There are at most
$t^{k+o(1)}$ choices of non-zero vectors $\vc$ for which $f_\vc$
is reducible over $\mathbb{Z}$, 
on appealing to work of Kuba~\cite{kuba}. (See also~\cite{Dub} and the references therein.) 
Moreover,  each such vector $\vc$ yields at most $k$ choices for $n$.
If $f_\vc(n)\neq 0$, on the other hand,  then we have at most $(tN)^{o(1)}$ choices for $q$ by the divisor bound~\eqref{eq: tau}.
  Hence we arrive at the bound
\[
  R(Q,N,t)\leq \(Qt^{k}+
Nt^{k+1} \)(Nt)^{o(1)}.
\]

On  returning to~\eqref{eq: S and R}, 
we therefore obtain 
\begin{align*}
\sum_{q \sim Q} \mu^2(q)S_k(q,N)
& \le 
\sum_{i=0}^I   \(Q + 
N 2^{i}
\) (2^iQ)^{o(1)}\\
&\le 
\left(
Q+
NQ^{2/(k+1)}
\right)
Q^{o(1)}.
\end{align*}
We substitute this into~\eqref{eq: U and S} and sum over $q$. This yields
\begin{align*}
\sum_{q \sim Q}\mu^2(q) &
U_{k}(q^2,H,N) \leq
\frac{H^{k+1}N}{Q}  + NQ 
+
\left(
Q+NQ^{2/(k+1)} \right)H^k
(NQ)^{o(1)}, 
\end{align*}
and the result now follows.
\end{proof}

\section{Proofs of main results} 

\subsection{Proof of Theorem~\ref{thm:Av F'}}

Fix a choice of $A\geq 1$. We 
proceed under the assumption that $N^{1/k} \leq H\leq N^A$, and allow all of our implied constants to depend on $A$. 
There are $O(H^{k+o(1)})$ choices of polynomials $f \in \cFH$ for which $f$ fails to be irreducible, 
the latter bound following from work of Kuba~\cite{kuba}, as in the proof of Lemma~\ref{lem: NmbrSol-Aver}.
The overall contribution from such $f$ is therefore $O(H^{k+o(1)}N)$.
We may henceforth  restrict to the set $\cFHs$ of irreducible polynomials 
$f \in \cFH$. 

Our argument proceeds along standard lines, beginning with 
an application of M\"obius inversion to interpret 
\[
\mu^2(n)=\sum_{d^2\mid n} \mu(d)
\]
as a sum over divisors. This leads to the expression
\[
S_f(H) =\sum_{d \ll \sqrt{H N^k}} \mu(d) \rho_f(d^2,N).
\]
where   $\rho_f(m,N)$
 is given by~\eqref{eq:def-rhof}.
Hence, for  arbitrary  $E\ge D\ge 1$, we have
\[
S_f(H) =M_f(H) + O(R_f^{(1)}(H))+O(R_f^{(2)}(H)), 
\]
where 
\[
M_f(H) =\sum_{d \le D} \mu(d) \rho_f(d^2,N)
\]
and 
\[
R_f^{(1)}(H) =\sum_{D <d \leq E}  \mu^2(d) \rho_f(d^2,N), \quad 
R_f^{(2)}(H) =\sum_{E <d \ll \sqrt{H N^k}}  \mu^2(d) \rho_f(d^2,N).
\]
To begin with, 
it is shown in~\cite[Equation~(4.8)]{Shp} that 
\begin{equation}
\label{eq:Mfh}
 M_f(H) = c_f N + O\(DH^{o(1)} + ND^{-1}H^{o(1)}\), 
\end{equation}
where the implied constant in this estimate depends only on $k$.

Next, 
on recalling the notation 
 $Q_f(S,N)$ that has been defined  in Section~\ref{sec:SqDiv},
we may write 
\[
R_f^{(2)}(H) \le
Q_f(S,N)+Q_{-f}(S,N),
\]
for some $S \ll HN^k/E^2$. 
Thus, Lemma~\ref{lem:LargeSquareDiv}  implies that 
\begin{equation}
\label{eq: U2 bound}
R_f^{(2)}(H)   \le   \(
 N^{1/2} \(\frac{HN^k}{E^2}\)^{1/2}+
\frac{ HN^k}{E^2}
  \) (HN)^{o(1)}.
\end{equation}

Turning to the remaining error term $R_f^{(1)}(H)$, we are only able to estimate it well on average over $f \in \cFHs$.
Recall the definition of $U_k(d^2,H,N)$ from
Section~\ref{s:prelim}.
 On changing the order of summation, we obtain 
\[
\sum_{f \in \cFHs}  R_f^{(1)}(H) \leq  \sum_{D <d \leq E}  \mu^2(d) U_k(d^2,H,N).
\]
To estimate the right hand side, we   use Lemma~\ref{lem: NmbrSol-Aver}.
After splitting the summation range in dyadic intervals, we  derive 
\[
\sum_{f \in \cFHs}  R_f^{(1)}(H) 
\le \( \frac{H^{k+1}N}{D}   + NE +  H^kE +  
H^kN E^{2/(k+1)} \)  H^{o(1)}.
\]
Since we are assuming 
$N\leq H^k $, we may drop the second term in this estimate. 
Hence
\begin{equation}
\label{eq: U1 bound}
\sum_{f \in \cFHs}  R_f^{(1)}(H) 
\le \( \frac{H^{k+1}N}{D}   +H^kE +  
H^kN E^{2/(k+1)} \)  H^{o(1)}.
\end{equation}

On accounting for the $O(H^{k+o(1)})$ choices of $f \in \cFH\setminus \cFHs$, it therefore follows 
from~\eqref{eq:Mfh}--\eqref{eq: U1 bound}
that 	
\begin{align*}
\sum_{f \in \cFH}  \left| S_f(N) - c_f N\right| \leq~&  H^{k+o(1)}N + \left(D+ND^{-1}\right)H^{k+1+o(1)}  
 \\
 &\quad +
\(  N^{1/2} \(\frac{HN^k}{E^2}\)^{1/2}+
\frac{ HN^k}{E^2}\) H^{k+1+o(1)}\\
& \qquad +  
\(
 \frac{H^{k+1}N}{D}   +H^kE +  
H^kN E^{2/(k+1)} \)  H^{o(1)}.
\end{align*} 
Hence, on noting that $\#\cFH\gg H^{k+1}$, it follows that 
\[
\frac{1}{\#\cFH}
\sum_{f \in \cFH}  \left| S_f(N) - c_f N\right| \le \Delta H^{o(1)},
\]
where 
\begin{align*}
\Delta = D+
\frac{N}{D}   +\frac{E}{H} + \frac{N E^{2/(k+1)}}{H}
+ 
\frac{H^{1/2}N^{(k+1)/2}}{E}+
\frac{ HN^k}{E^2}.
\end{align*}  
We take $D=N^{1/2}$, leading to 
\[
\frac{1}{\#\cFH}
\sum_{f \in \cFH}  \left| S_f(N) - c_f N\right| \le \Delta_0 H^{o(1)},
\]
where
\[
\Delta_0=\inf_{\sqrt{N}\leq E\ll \sqrt{HN^k}}\Delta_0(E)
\]
and 
\[
\Delta_0(E)=N^{1/2}+
\frac{E}{H} + \frac{N E^{2/(k+1)}}{H}
+\frac{H^{1/2}N^{(k+1)/2}}{E}+
\frac{ HN^k}{E^2}.
\]
We expect the dominant contribution to come from the second and fourth terms and so we choose 
\[
E=
\min\left\{
H^{3/4}N^{(k+1)/4},
\sqrt{HN^k}\right\},
\] in order to minimise their contribution. Note that $E\geq \sqrt{N}$ with this choice.
This therefore leads to the bound 
\begin{align*}
\Delta_0
&\ll 
N^{1/2}+
\frac{N^{(k+1)/4}}{H^{1/4}}+
 \frac{N^{3/2} }{H^{1-3/(2k+2)}}
+
\frac{ N^{(k-1)/2}}{H^{1/2}},
\end{align*}
which thereby concludes the proof of the following result.

\begin{theorem}
\label{thm:Av F} 
Let $A\geq 1$ and $k\ge 2$ 
be fixed and assume that 
$H,N\to \infty$ in such a way that  
$N^{1/k}\leq H\leq N^A$.
Then we have
\begin{align*}
\frac{1}{\#\cFH}\sum_{f \in \cFH} & \left| S_f(N) - c_f N\right|\\
&  \le  
\(
N^{1/2}+
\frac{N^{(k+1)/4}}{H^{1/4}}+
 \frac{N^{3/2} }{H^{1-3/(2k+2)}}
+
\frac{ N^{(k-1)/2}}{H^{1/2}}
 \)H^{o(1)}.
\end{align*}
\end{theorem}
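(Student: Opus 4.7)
The plan is to use Möbius inversion on the square-free indicator and handle three ranges of the divisor sum separately: a main term (small $d$), a tail treated by the determinant method (large $d$), and an intermediate range treated on average by the geometry of numbers. First, I would discard the reducible $f\in\cFH$: by the bound of Kuba invoked in the proof of Lemma~\ref{lem: NmbrSol-Aver}, there are only $O(H^{k+o(1)})$ such polynomials, contributing $O(H^{k+o(1)}N)$ after the trivial estimate $S_f(N)\le N$, which is absorbed in the claimed bound once we divide by $\#\cFH\gg H^{k+1}$. This reduces the question to the irreducible subfamily $\cFHs$. Writing $\mu^2(n)=\sum_{d^2\mid n}\mu(d)$, I would express
\[
S_f(N)=\sum_{d\ll\sqrt{HN^k}}\mu(d)\rho_f(d^2,N)
\]
and split at two thresholds $D\le E$, obtaining a main term $M_f(N)$, a middle piece $R_f^{(1)}(N)$, and a tail $R_f^{(2)}(N)$.

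For the main term, I would quote the standard calculation in~\cite[Eq.~(4.8)]{Shp}, which gives $M_f(N)=c_f N+O((D+N/D)H^{o(1)})$ pointwise in $f$. For the tail, rewriting $f(n)=sr^2$ with $s\ll HN^k/E^2$ identifies $R_f^{(2)}(N)$ with $Q_{\pm f}(S,N)$ from Section~\ref{sec:SqDiv}, so Lemma~\ref{lem:LargeSquareDiv} (whose proof uses Heath-Brown's determinant method) yields the pointwise bound
\[
R_f^{(2)}(N)\ll\Bigl(N^{1/2}(HN^k)^{1/2}E^{-1}+HN^kE^{-2}\Bigr)(HN)^{o(1)}.
\]

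The essential step is the middle range, where a pointwise bound is too weak. Here I would swap the order of summation, so the inner sum becomes $U_k(d^2,H,N)$, and apply Lemma~\ref{lem: NmbrSol-Aver} dyadically over $d\in(D,E]$ to obtain
\[
\sum_{f\in\cFHs}R_f^{(1)}(N)\le\Bigl(H^{k+1}ND^{-1}+NE+H^kE+H^kNE^{2/(k+1)}\Bigr)H^{o(1)},
\]
in which the $NE$ term is dominated by $H^kE$ under the standing hypothesis $H\ge N^{1/k}$.

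Combining the three contributions, dividing by $\#\cFH\gg H^{k+1}$, and choosing parameters completes the argument. The balance $D=\sqrt{N}$ is forced by the main-term error. For $E$, the two decisive terms to balance are $E/H$ (from $R_f^{(1)}$) and $H^{1/2}N^{(k+1)/2}/E$ (from $R_f^{(2)}$), suggesting $E=H^{3/4}N^{(k+1)/4}$, capped at $\sqrt{HN^k}$ to keep $E$ within the range of the sum; in the capped regime the fourth claimed term $N^{(k-1)/2}/H^{1/2}$ is produced by $HN^k/E^2$. The main obstacle is the parasitic $H^kNE^{2/(k+1)}/H^{k+1}$ contribution from the geometry-of-numbers estimate, which cannot be avoided and gives rise to the third term $N^{3/2}/H^{1-3/(2k+2)}$ in the final bound; controlling this term, and checking that the chosen $E$ remains $\ge D=\sqrt{N}$ throughout the allowed range of $H$, is the source of the new exponent and is precisely what separates Theorem~\ref{thm:Av F'} from the weaker range in~\cite{Shp}.
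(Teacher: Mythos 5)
Your proposal is correct and follows essentially the same route as the paper: discard reducibles via Kuba's bound, Möbius-invert, split at $D$ and $E$, estimate the main term pointwise by~\cite[Eq.~(4.8)]{Shp}, the large-$d$ tail pointwise via $Q_f(S,N)$ and Lemma~\ref{lem:LargeSquareDiv}, and the middle range only on average via $U_k(d^2,H,N)$ and Lemma~\ref{lem: NmbrSol-Aver}, with the same choices $D=\sqrt{N}$ and $E=\min\{H^{3/4}N^{(k+1)/4},\sqrt{HN^k}\}$. One small bookkeeping slip: the fourth term $N^{(k-1)/2}/H^{1/2}$ arises from $HN^k/E^2$ in the \emph{uncapped} regime $E=H^{3/4}N^{(k+1)/4}$ (i.e.\ $H\leq N^{k-1}$); in the capped regime $E=\sqrt{HN^k}$ one has $HN^k/E^2=1\leq N^{1/2}$ and all five terms are absorbed into the stated bound.
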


To deduce Theorem~\ref{thm:Av F'} we assume that $k\geq 4$ and proceed to assess when each of the terms is $O(N^{1-\delta})$ for some $\delta>0$. The first term is obviously satisfactory. One  sees that the second term and fourth terms are satisfactory if 
$H\geq N^{k-3+\ve}$ for some $\ve>0$. Finally, the third term is only satisfactory if 
$H\geq N^{1/2+3/(4k-2)+\ve}$, but this is implied by the latter condition. 
This completes the proof of Theorem~\ref{thm:Av F'}.

\subsection{Proof of Theorem~\ref{thm:Av G}}

The aim of this section is to estimate the quantity
\[
\Sigma=\sum_{f \in \cGH}  \left| S_f(N) - c_f N\right|.
\]
It is convenient to define 
\[
M = \max\{H,N^k\},
\]
so that $f(n)=a+g(n) = O(M)$ if $f\in \mathcal{G}_g(H)$ and 
 $1 \le n \le N$. 
Mimicking the previous argument and using~\eqref{eq:Mfh},
we obtain 
\begin{equation}
\label{eq: S and W}
\Sigma
 \leq DH^{1+o(1)}  +
\frac{NH^{1+o(1)}}{D} 
 +  \sum_{D <d \le c \sqrt{M}}   W_g(d^2,H,N),
\end{equation} 
where $c>0$ is a 
constant depending only on the polynomial $g$ and 
$W_g(d^2,H,N)$ is defined in Section 
\ref{s:prelim}.

Suppose first that  $H \ge N^k$. Then  we simply apply~\eqref{eq:Triv Cong W}  and get 
\begin{align*}
\Sigma& \ll DH^{1+o(1)}  +\frac{
NH^{1+o(1)}}{D} 
+  \sum_{D <d \le c \sqrt{H}}    (H/d^2+1)N \\
& \le \left(DH+\frac{HN}{D} + H^{1/2} N\right)H^{o(1)}.
\end{align*} 
Taking  $D = N^{1/2}$, we derive
\begin{equation}
\label{eq: large H}
\Sigma \leq
\(HN^{1/2} + H^{1/2} N \)H^{o(1)}
  \le H^{1+o(1)}N^{1/2}, 
\end{equation}
if 
 $H \ge N^k$.

We may henceforth assume that $H \le N^k$ and thus $M = N^k$.
We now choose $D=N^{1/2}$ and two more  parameters $F$ and $E$ with $F \ge E \ge N^{1/2}$. Then we may  write  
\begin{equation}
\label{eq: W1 W2 W3}
 \sum_{N^{1/2} <d \le c \sqrt{M}}   W_g(d^2,H,N) = \fW_1 + \fW_2 +  \fW_3\end{equation} 
 where 
\begin{align*}
\fW_1 &=  \sum_{N^{1/2} <d \le E}   W_k(d^2,H,N), \\
 \fW_2 &=  \sum_{E<d \le F}  W_k(d^2,H,N), \\
 \fW_3 &=  \sum_{F <d \le  c N^{k/2} }  W_k(d^2,H,N) . 
\end{align*}

To begin with,  we appeal to~\eqref{eq:Triv Cong W} to estimate   
\begin{equation}
\label{eq: W1 bound prelim}
\fW_1 \ll   \sum_{N^{1/2} <d \le E}   (H/d^2+1)N 
\ll HN^{1/2} +  EN.
\end{equation} 
It is convenient to choose 
$
E=\max\{H^{1/2}, N^{1/2}\},
$
so that
\begin{equation}
\label{eq: W1 bound}
\fW_1    \ll HN^{1/2} +  H^{1/2} N.
\end{equation} 
Indeed, for $H\le  N$ we have $E=N^{1/2}$ and thus $\fW_1 = 0$, while 
for $H > N$ we have $E = H^{1/2}$ and~\eqref{eq: W1 bound} 
follows from~\eqref{eq: W1 bound prelim}. 

Therefore, 
combining~\eqref{eq: S and W}, \eqref{eq: W1 W2 W3}
and~\eqref{eq: W1 bound}, we obtain
 \begin{equation}
\label{eq: S and W2 W3}
\Sigma\ll H^{1+o(1)} N^{1/2}  + H^{1/2} N 
+  \fW_2 +  \fW_3.
\end{equation} 
It remains to estimate  $\fW_2$ and $\fW_3$. 

To estimate $\fW_2$ we appeal to   Lemma~\ref{lem:Poly Box-2} to derive 
\[
\fW_2  \ll   \sum_{E<  d  \leq F} \( \frac{HN}{d^2}+
 N^{1+o(1)} \(\frac{1}{d^2} + \frac{1}{N} + \frac{d^2}{N^k}\)^{\eta(k)} \), 
\]
where $\eta(k)$ is given by~\eqref{eq:eta-k}.  
Therefore, noticing  that 
we have 
\[
\frac{1}{d^2} <  \frac{1}{N}
\]
for  $d > E \ge N^{1/2}$,
we obtain
 \begin{equation}
\begin{split}
\label{eq: W2 bound}
\fW_2 
& \ll HN/E  + FN^{1 -\eta(k) +o(1)} + F^{1+2\eta(k)} N^{1-k\eta(k)+o(1)}\\
& \leq H N^{1/2}+ FN^{1 -\eta(k) +o(1)} + F^{1+2\eta(k)}  N^{1-k\eta(k)+o(1)}.
\end{split}
\end{equation}

Finally,  as in the proof of Theorem~\ref{thm:Av F}, we treat $\fW_3$ via 
 Lemma~\ref{lem:LargeSquareDiv}. Recalling our assumption $H \le N^k$
 and observing that there are $O(1)$ choices of $f\in \cGH$ that fail to be irreducible, 
 we derive
\begin{equation}
\begin{split}
\label{eq: W3 bound}
\fW_3 
&  \le   \(N + HN^{1/2} \(N^k /F^2\)^{1/2}+HN^k /F^2\) N^{o(1)}\\
&  \le   \(N + HN^{(k+1)/2}/ F+HN^k /F^2\) N^{o(1)}.
\end{split}
\end{equation} 
We now observe that if $N^{(k+1)/2}/F\ge N$, which is equivalent to  $F\leq   N^{(k-1)/2}$,
then  the  bound becomes trivial. Thus we 
always assume that 
\[
F \ge N^{(k-1)/2 }, 
\]
in which case we see that the third term in~\eqref{eq: W3 bound} is dominated by the second term. 

Substituting the bounds~\eqref{eq: W2 bound} and~\eqref{eq: W3 bound}
in~\eqref{eq: S and W2 W3}, we are led to the upper bound 
\begin{align*}
\Sigma
 \leq~& \bigl(H N^{1/2} + H^{1/2} N +  FN^{1 -\eta(k)} 
+ F^{1+2\eta(k)} N^{1-k\eta(k)}+ H N^{(k+1)/2} /F\bigr) H^{o(1)}.
\end{align*}
Since  $F \ge N^{(k-1)/2 }$, we see that  $FN^{1 -\eta(k)}  \le  F^{1+2\eta(k)} N^{1-k\eta(k)}$ and so
\begin{equation}
\begin{split}
\label{eq: fin bound}
\Sigma   \leq~& \bigl(H N^{1/2} + H^{1/2} N + F^{1+2\eta(k)} N^{1-k\eta(k)}  
+ H N^{(k+1)/2} /F\bigr) H^{o(1)}.
\end{split}
\end{equation} 
To optimise~\eqref{eq: fin bound}, we choose
\[
F = \max\left\{\(H N^{(k-1)/2+k\eta(k)}\)^{1/(2+2\eta(k))}, N^{(k-1)/2 }\right\}
\]
for which 
\begin{align*}
F^{1+2\eta(k)} N^{1-k\eta(k)} 
 & =H N^{(k+1)/2} /F\\
 &= H^{1-\frac{1}{2+2\eta(k)}}
 N^{(k+3)/4-\frac{(k+1)\eta(k)}{4+4\eta(k)}}.
\end{align*} 
After substitution in~\eqref{eq: fin bound}, 
this completes our treatment of the case 
$H \le N^k$. Taken together with~\eqref{eq: large H}, 
and observing that 
$\#\cGH\gg H$, 
this therefore concludes the proof of the following theorem.

\begin{theorem}
\label{thm:Av G} 
For a fixed polynomial $g \in \Z[X]$ of degree $k\ge 2$, we have
\begin{align*}
\frac{1}{\#\cGH}&\sum_{f \in \cGH} \left| S_f(N) - c_f N\right| \\
&\qquad \quad  \le  
\(N^{1/2} + \frac{N}{H^{1/2}} 
+ \frac{N^{(k+1)/2-\eta(k)}}{H}+\frac{N^{(k+3)/4-\frac{(k+1)\eta(k)}{4+4\eta(k)}}}
{H^{\frac{1}{2+2\eta(k)}}}
\) H^{o(1)},
\end{align*}
where $\eta(k)$ is given by~\eqref{eq:eta-k}.  
\end{theorem}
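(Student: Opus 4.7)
The plan is to follow the same Möbius-inversion scaffolding used for Theorem~\ref{thm:Av F'}, writing
\[
S_f(N) - c_f N = \Bigl(M_f(N)-c_f N\Bigr) + \sum_{D < d \leq c\sqrt{M}} \mu(d)\rho_f(d^2,N),
\]
where $M = \max\{H,N^k\}$ bounds $|f(n)|$ for $f \in \cGH$ and $1\le n\le N$. The main-term contribution is handled exactly as in \cite[Eq.~(4.8)]{Shp}, losing at most $DH^{1+o(1)} + NH^{1+o(1)}/D$ after summation over $f$, so setting $D = N^{1/2}$ absorbs this into the final bound. What remains is to estimate the average of $\rho_f(d^2,N)$ over $f \in \cGH$, which equals $W_g(d^2,H,N)$, and the point is that we now have three essentially different tools for this depending on the size of $d$.

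First I would dispose of the easy regime $H \geq N^k$: here $M = H$ so the range of $d$ is $d \ll H^{1/2}$, and the trivial bound \eqref{eq:Triv Cong W} immediately gives $\sum_d W_g(d^2,H,N) \ll HN^{1/2} + H^{1/2}N$, which suffices for the claimed estimate in this range. For the substantive regime $H \leq N^k$ I split the sum at two thresholds $E \leq F$, producing subsums $\fW_1$ (small $d$), $\fW_2$ (medium $d$), $\fW_3$ (large $d$). On $\fW_1$ I would again apply \eqref{eq:Triv Cong W}, and choosing $E = \max\{H^{1/2},N^{1/2}\}$ makes $\fW_1 \ll HN^{1/2}+H^{1/2}N$. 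On $\fW_2$, with $d > E \geq N^{1/2}$, I would feed Lemma~\ref{lem:Poly Box-2} through directly: the main term $HN/d^2$ telescopes to at most $HN/E \leq HN^{1/2}$, while the discrepancy term summed in $d$ yields $FN^{1-\eta(k)+o(1)} + F^{1+2\eta(k)}N^{1-k\eta(k)+o(1)}$. On $\fW_3$, which involves square divisors that are large relative to the polynomial values, I would apply Lemma~\ref{lem:LargeSquareDiv} after noting there are only $O(1)$ reducible $f\in\cGH$; with $S \ll N^k/F^2$ and $N \leq S$ only when $F \leq N^{(k-1)/2}$ (in which case the bound is trivial anyway), one obtains $\fW_3 \leq (N + HN^{(k+1)/2}/F) N^{o(1)}$ under the harmless assumption $F \geq N^{(k-1)/2}$.

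Assembling these yields
\[
\Sigma \ll \Bigl(HN^{1/2} + H^{1/2}N + F^{1+2\eta(k)}N^{1-k\eta(k)} + HN^{(k+1)/2}/F\Bigr)H^{o(1)},
\]
the middle two terms of $\fW_2$ being swallowed once $F \geq N^{(k-1)/2}$. The final step, and the one requiring some care, is to pick $F$ so as to equalise the last two terms; the natural choice
\[
F = \max\Bigl\{\bigl(HN^{(k-1)/2+k\eta(k)}\bigr)^{1/(2+2\eta(k))},\ N^{(k-1)/2}\Bigr\}
\]
makes both equal to $H^{1-\frac{1}{2+2\eta(k)}}N^{(k+3)/4-\frac{(k+1)\eta(k)}{4+4\eta(k)}}$, and after dividing by $\#\cGH\gg H$ we recover the stated bound.

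The main conceptual obstacle is keeping track of which of the three regimes of $d$ dominates in which parameter range of $H$ relative to $N$; the bookkeeping around the constraints $E \geq N^{1/2}$ and $F \geq N^{(k-1)/2}$ (needed so that Lemma~\ref{lem:Poly Box-2} beats the trivial bound and so that Lemma~\ref{lem:LargeSquareDiv} is not trivial, respectively) is delicate but routine. The genuinely new input beyond \cite{Shp} is the use of Lemma~\ref{lem:Poly Box-2}, whose Weyl-type savings $\eta(k)$ from the Vinogradov mean value theorem are what drive the improvement in the exponent of $H$; all other components are essentially the same machinery employed in Theorem~\ref{thm:Av F'}.
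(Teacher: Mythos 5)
Your proposal reproduces the paper's proof essentially verbatim: the same M\"obius-inversion scaffold with $M=\max\{H,N^k\}$, the same easy disposal of $H\ge N^k$ via \eqref{eq:Triv Cong W}, the same three-way split of the $d$-range at $E=\max\{H^{1/2},N^{1/2}\}$ and $F$, with the trivial bound on $\fW_1$, Lemma~\ref{lem:Poly Box-2} on $\fW_2$, Lemma~\ref{lem:LargeSquareDiv} on $\fW_3$, and the identical choice of $F$ equalising the last two terms. There is no substantive divergence from the paper's argument.
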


Finally, to deduce Theorem~\ref{thm:Av G'} we need to discover when each of the terms is $O(N^{1-\delta})$ for some $\delta>0$. The first term is obviously satisfactory. One  sees that the second term is satisfactory if 
$H\geq N^{\ve}$ for any $\ve>0$. Finally, 
the fourth term is only satisfactory if 
$H\geq N^{(k-1)/2+\eta(k)+\ve}$, 
under which assumption the third term is also satisfactory. 
This  therefore completes the proof of Theorem~\ref{thm:Av G'}.

\end{document}